\numberwithin{equation}{section}
\newtheorem{theorem}{Theorem}
\newtheorem{lemma}[theorem]{Lemma}
\newtheorem{definition}{Definition}
\newtheorem{corollary}[theorem]{Corollary}
\newtheorem{proposition}[theorem]{Proposition}
\newtheorem{remark}{Remark}
\newtheorem{algorithm}{Algorithm}
\def\diam{\operatorname{diam}}
\def\argmin{\operatorname{argmin}}
\def\min{\displaystyle\operatorname{min}}
\def\max{\operatorname{max}}
\newcommand{\ds}{\displaystyle}
\newcommand{\R}{\mathbb{R}}
\newcommand{\J}{{\cal J}}
\newcommand{\K}{\mathbb{K}}
\newcommand{\N}{\mathbb{N}}
\newcommand{\subsetinf}{\displaystyle\mathop{\subset}_{\infty}}
\definecolor{lightgray}{gray}{0.95}
\newcommand{\myscale}{0.45}
\newcommand{\myscaletwo}{0.40}
\newtheorem*{Armijo}{Armijo step size}
\newtheorem*{Adaptive}{Adaptive step size}
\newtheorem*{diminishing}{Diminishing step size}
\begin{document}
\title{A generalized conditional gradient method  for  multiobjective  composite optimization problems}
\author{
P. B.  Assun\c c\~ao\thanks{Instituto de Matem\'atica e Estat\'istica, Universidade Federal de Goi\'as,  CEP 74001-970 - Goi\^ania, GO, Brazil, E-mails: {\tt  pedro.filho@ifg.edu.br},  {\tt  orizon@ufg.br},  {\tt  lfprudente@ufg.br}. The authors was supported in part by  CNPq grants 305158/2014-7 and 302473/2017-3, CAPES.}
\and
O.  P. Ferreira\footnotemark[1]
\and
L. F. Prudente\footnotemark[1]
}
\maketitle
\noindent
{\bf Abstract:} 
This article deals with multiobjective composite optimization problems that consist of simultaneously minimizing several objective functions,  each of which is composed of a combination of smooth and non-smooth functions. To tackle these problems, we propose a generalized version of the  conditional gradient method, also known as Frank-Wolfe method. The method is analyzed with  three step size strategies, including Armijo-type, adaptive,  and diminishing step sizes.  We establish asymptotic convergence properties and iteration-complexity bounds, with and without convexity assumptions on the objective functions.  Numerical experiments illustrating the practical behavior of the methods are presented.

\vspace{12pt}
\noindent
{\bf Keywords:} Conditional gradient method; Frank-Wolfe method; multiobjective optimization; Pareto optimality; constrained optimization problem.
\section{Introduction}
Multiobjective optimization problems typically involve the simultaneous minimization of multiple and conflicting objectives.
A solution to the problem leads to a set of alternatives with different trade-offs between the objectives. 
In fact, in this scenario, we use the concept of {\it Pareto optimality} to caracterize a solution.
In summary, a point is called {\it Pareto optimal} if, with respect to this point, none of the objective functions can be improved without degrading another.
One strategy for computing Pareto points that has become very popular consists of extending methods for scalar-valued optimization to vector-value optimization, rather than using scalarization approaches~\cite{geoffrion1968proper}.
To the best of our knowledge, this strategy was coined in the work \cite{FliegeSvaiter2000} that proposed the steepest descent methods for unconstrained multiobjective optimization. 
Since of then, new properties related to this method  have been discovered and several variants of it  have been considered, see for example  \cite{BelloCruz2013,  FliegeVazVicente2018, FukudaDrummond2011, FukudaDrummond2013,  Drummond2004, Drummond2005,ellen}.  
In recent years,  there has been a significant increase in the number of papers  addressing  concepts, techniques,  and methods for  multiobjective  optimization, see for example \cite{ansary,Bento2018, Carrizo2016,doi:10.1137/10079731X,Fliege.etall2009,Fliege2016,Goncalves2022,PerezPrudente2018, Prudente2022,Tabatabaei2018,ThomannEichfelder2019,wang2019extended}.

In the present paper, we consider {\it multiobjective composite optimization problems}, where the objective function  $F:\mathbb{R}^n \to (\R\cup\{+\infty\})^m$, given by $F(x):=(f_1(x),\ldots,f_m(x))$,  has the following special separable structure:
$$f_j(x):= g_j(x)+h_j(x), \quad \forall j=1,\ldots,m,$$
where, for each $j=1,\ldots,m$, $g_j:\R^n\to \R\cup\{+\infty\}$ is proper, convex, and lower semicontinuous, and $h_j:\R^n\to \R$  is continuously differentiable.
We define $G:\mathbb{R}^n \to(\R\cup\{+\infty\})^m$ by $G(x):=(g_1(x),\ldots,g_m(x))$ and $H:\mathbb{R}^n \to \mathbb{R}^m$ by $H(x):=(h_1(x),\ldots,h_m(x))$, and denote this problem by
\begin{equation}\label{eq:Problem1} 
\min_{x\in {\mathbb{R}^n}}{F(x):=G(x)+H(x)}.
\end{equation}
An important instance of \eqref{eq:Problem1}  is obtained when $G$ is the indicator function (in a vector sense) of a given set  ${\cal C}\subset \R^{n}$, i.e.,  for all $j=1,\ldots,m$, $g_j(x)=0$ for all $ x \in {\cal{C}}$ and $g_j(x) =+\infty$ otherwise.
In this case,  \eqref{eq:Problem1} merges into  the following  constrained multiobjective  optimization problem 
 \begin{equation} \label{eq:ProblemV2} 
\min_{x\in{\cal{C}}}{H(x)}.
\end{equation}
Furthermore, as discussed in \cite{TanabeFukudaYamashita2019}, the separable structure in \eqref{eq:Problem1}  can be used to model robust multiobjective optimization problems, which are problems that include uncertain parameters and the optimization process is considered under the worst scenario.

As far as we know, \cite{doi:10.1080/02331934.2018.1440553} was the first paper to deal with problem~\eqref{eq:Problem1}, where a forward--backward proximal point type algorithm was studied.
In \cite{TanabeFukudaYamashita2019}, a proximal gradient method to solve problem~\eqref{eq:Problem1} was proposed, see also \cite{doi:10.1080/02331934.2020.1800699,doi:10.1007/s11590-022-01877-7}.
More recently,  some Newton-type approaches were considered in \cite{doi:10.1080/10556788.2022.2157000,https://doi.org/10.48550/arxiv.2108.00125}.
It is worth mentioning that  a version of the  {\it conditional gradient method} also known as {\it Frank-Wolfe algorithm}, see  \cite{FrankWolfe1956,  LevitinPolyak1966},  to solve  \eqref{eq:ProblemV2}  was proposed and analyzed in \cite{Assunccao2021}. However,  a generalized version of this method to solve \eqref{eq:Problem1}  has not yet been considered.

In this paper,  a  multiobjective  version   of the scalar  generalized conditional gradient method~\cite{Beck2014, BrediesLorenzMaass2009, RakotomamonjyFlamaryCourty2015}  to solve problem~\eqref{eq:Problem1} is proposed. 
The method is analyzed with  three step size strategies, including Armijo-type, adaptive,  and diminishing step sizes. 
Asymptotic convergence properties and  iteration-complexity bounds with and without convexity assumptions on the objective functions are stablished.  
Numerical experiments on some robust multiobjective optimization problems illustrating the practical behavior of the method are presented, and comparisons with the proximal gradient method~ \cite{TanabeFukudaYamashita2019} are discussed.

The organization of this paper is as follows. In Section~\ref{sec:Preliminares}, some notations, definitions, and auxiliary results used throughout of the paper are presented. Section~\ref{section of hipotesys} presents the assumptions on the considered multiobjective  composite optimization problem need to our analysis.  Moreover, we introduce the  gap function associated to problem~\eqref{eq:Problem1}  and study its  main  properties. In Section~\ref{algorithm}, we introduce a generalization of  the conditional gradient method  for solving  problem~\eqref{eq:Problem1}. We will also study  asymptotic convergence  properties and iteration-complexity bounds for the generated sequence by the proposed method. Numerical experiments are presented in Section~\ref{numerical}.  Finally, some conclusions are given in Section~\ref{conclusions}.

\section{Preliminaries} \label{sec:Preliminares}

 In this section, we present   some notations, definitions, and results  used throughout the paper.   We denote ${\mathbb{N}}=\{0,1,2,\ldots\}$ and  ${\mathbb{N}^*}=\{1,2,3, \ldots\}$. 
 $\R$, $\R_+$, and $\R_{++}$ are the set of real numbers, the set of nonnegative real numbers, and the set of positive real numbers, respectively.
 We define $\overline{\R}:=\mathbb{R} \cup \left\{ +\infty\right\}$.
 Likewise,  $\overline{\mathbb{R}}^m := (\mathbb{R} \cup \left\{ +\infty\right\})^m$.
Let ${\cal J}:=\{1,\ldots,m\}$, ${\mathbb R}^{m}_{+}:=\{u\in {\mathbb R}^m \mid   u_{j}\geq 0, \forall j\in {\cal J}\}$, and  ${\mathbb R}^{m}_{++}=\{u\in {\mathbb R}^m \mid u_{j}> 0, \forall j\in  {\cal J} \}$. For $u, v \in {\mathbb R}^{m}$, $v\succeq u$ (or $u \preceq v$) means that $v-u \in {\mathbb R}^{m}_{+}$ and $v\succ u$ (or $u \prec v$) means that  $v-u \in {\mathbb R}^{m}_{++}$. 
 The symbol $\langle \cdot, \cdot \rangle$ is the usual inner product in $\R^n$ and $\| \cdot \|$ denotes the Euclidean norm.  
  Let ${\cal C}\subset {\mathbb R}^{n}$ be a  convex set.  If  ${\cal C}$ is  compact,   its {\it diameter}  is the  finite number   $ \diam({\cal C}):= \max\left\{ \|x-y\| \mid \forall x, y\in {\cal C}\right\}$. 
    If $\K = \{k_1, k_2, \ldots \} \subseteq \N$, with $k_j < k_{j+1}$ for all $j\in \N$, then  we denote $\K  \subsetinf \N$. 
  The notation  $\sigma(t):= o(t)$ for $t\in \mathbb{R}/\{0\}$ means that  $\lim_{t\to 0}\sigma(t)/t=0$.

 The effective domain of  $\psi: \mathbb{R}^n \to \overline{\mathbb{R}}$ is defined as $ \mathrm{dom}(\psi):=\left\{ x\in \mathbb{ R}^n \mid \psi(x)< +\infty \right\}.$ The  function $\psi :\mathbb{R}^n \to  \overline{\mathbb{R}}  $ is said to be {\it Lipschitz continuous} with  constants $L>0$ on ${\cal C}\subset \mathrm{dom}(\psi)$ whenever  $|\psi(x)-\psi(y)|\leq L\|x-y\|$, for all $x, y\in {\cal C}.$
Let  $\psi: \mathbb{R}^n \to \overline{\mathbb{R}}$ be a convex function. The {\it directional derivative} of  $\psi$ at $x\in  \mathrm{dom}(\psi)$ in the direction $d\in \mathbb{R}^n$ is given by  $\psi^{\prime}(x;d):=\lim_{\alpha \to 0^+}({\psi(x+\alpha d)-\psi(x)})/{\alpha}$.  When $\psi$ is differentiable at $x\in  \mathrm{int}(\mathrm{dom}(\psi)) $, we can show that $\psi^{\prime}(x;d)=\left\langle \nabla \psi(x),d\right\rangle $. 
The next lemma is a well-known result in convex analysis whose proof can be found in \cite[Section 4.1]{bertsekas2003convex}.

\begin{lemma}\label{mono.der.direc.}
Let $\psi:\mathbb{R}^n \to \overline{\mathbb{R}} $ be a convex function. Then, the function  $\lambda \mapsto (\psi(x+\lambda d)-\psi(x))/\lambda,$  is non-decreasing in $(0,+\infty) $. In particular,  for all $\lambda \in(0,1]$, we have $ (\psi(x+\lambda d)-\psi(x))/{\lambda}\leq \psi(x+d)-\psi(x)$.  Consequently,  $\psi'(x; d)\leq \psi(x+d)-\psi(x).$
\end{lemma}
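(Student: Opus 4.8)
The plan is to reduce everything to the one-dimensional difference quotient
$q(\lambda):=(\psi(x+\lambda d)-\psi(x))/\lambda$, defined for $\lambda>0$, and to extract the claimed monotonicity directly from the definition of convexity. The three assertions are nested: the monotonicity of $q$ immediately yields the ``in particular'' inequality by evaluating at two specific arguments, and the ``consequently'' statement follows by passing to the limit $\lambda\to 0^+$. So the whole proof hinges on the first claim.

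To prove that $q$ is non-decreasing, I would fix two values $0<\lambda_1<\lambda_2$ and write the intermediate point as a convex combination of the two endpoints of the segment from $x$ in direction $d$. Setting $t:=\lambda_1/\lambda_2\in(0,1)$, one has the identity $x+\lambda_1 d=(1-t)x+t(x+\lambda_2 d)$, so convexity of $\psi$ gives $\psi(x+\lambda_1 d)\le (1-t)\psi(x)+t\,\psi(x+\lambda_2 d)$. Subtracting $\psi(x)$ from both sides produces $\psi(x+\lambda_1 d)-\psi(x)\le t\bigl(\psi(x+\lambda_2 d)-\psi(x)\bigr)$, and dividing by $\lambda_1=t\lambda_2$ rearranges exactly into $q(\lambda_1)\le q(\lambda_2)$. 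This establishes that $\lambda\mapsto q(\lambda)$ is non-decreasing on $(0,+\infty)$.

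For the second assertion, I would simply take $\lambda_2=1$ and $\lambda_1=\lambda\in(0,1]$ in the monotonicity just proved, which reads $q(\lambda)\le q(1)$, i.e. $(\psi(x+\lambda d)-\psi(x))/\lambda\le \psi(x+d)-\psi(x)$. For the final assertion, I would use that a non-decreasing function has a limit at the left endpoint of its domain equal to its infimum: thus $\psi'(x;d)=\lim_{\lambda\to 0^+}q(\lambda)=\inf_{\lambda>0}q(\lambda)\le q(1)=\psi(x+d)-\psi(x)$, which is the desired bound.

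The computation here is entirely routine, so there is no real technical obstacle; the only points requiring a little care are bookkeeping with the extended real line (the convexity inequality and the manipulations remain valid under the usual conventions for $+\infty$, provided $x$ and the relevant points lie where the quantities are meaningful) and the justification that the directional-derivative limit exists as the infimum of $q$, which is precisely the content of the monotonicity established in the first step. Hence the monotonicity of $q$ is both the crux and essentially the entire proof.
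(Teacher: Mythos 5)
Your argument is correct and is the standard convex-combination proof of the monotonicity of difference quotients; the paper itself gives no proof, simply citing \cite[Section 4.1]{bertsekas2003convex}, and your write-up is exactly the argument found there. The only implicit assumption worth making explicit is that $x\in\dom(\psi)$, so that $\psi(x)$ is finite and the subtractions are meaningful, which is how the lemma is used throughout the paper.
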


\begin{definition}
A  function $\psi:\mathbb{R}^n \to  \overline{\mathbb{R}}$  is upper semicontinuous (u.s.c) at $x\in\mathbb{R}^n$ if, for any sequence $ (x^k)_{k\in \mathbb{N}}$ converging to $x$, $\lim\sup_{k \to \infty} \psi(x^k)\leq \psi(x)$.  Likewise,  $\psi$ is said to be lower semicontinuous (l.s.c) at $x\in\mathbb{R}^n$ whenever $-\psi$ is (u.s.c)  at $x\in\mathbb{R}^n$
 or, equivalently, if  $\lim\inf_{k\to \infty}\psi(x^k) \geq \psi(x)$. 
 We say that $\psi$ is upper semicontinuous (resp. lower semicontinuous) if it is upper (resp. lower) semicontinuous at every point of its domain.
  \end{definition}

Let $\Psi :\mathbb{R}^n \to \overline{\mathbb{R}}^m$ be a vector-valued function with $\Psi(x):=(\psi_1(x),\dots,\psi_m(x))$ and  consider the problem 
\begin{equation}\label{eq:Problem2} 
\min_{x\in {\mathbb{R}^n}}{\Psi(x)}.
\end{equation}
 The {\it effective domain} of $\Psi$ is denoted by $\mathrm{dom}(\Psi):=\left\{ x\in \mathbb{R}^n \mid \psi_j(x)< +\infty, ~ \forall j\in\J \right\}$.  
 A point $x^* \in \mathbb{R}^n$ is called a  {\it Pareto optimal point} of   \eqref{eq:Problem2} if there exists no other $x\in \mathbb{R}^n$ such that $F(x)\preceq F(x^*)$ and $F(x) \neq F(x^*)$. In turn, $x^* \in \mathbb{R}^n$ is called a  {\it weakly Pareto optimal point} of   \eqref{eq:Problem2}, if   there exists no other $x \in \mathbb{R}^n$ such that $F(x) \prec F(x^*)$.  A {\it necessary optimality condition} for  problem \eqref{eq:Problem2} at a point ${\bar x}\in \mathbb{R}^n $  is given by 	
\begin{equation} \label{eq:CondCPoint22}
\max_{j\in {\cal J}}    \psi^{\prime}_{j}({\bar x}, d) \geq 0 , \quad \forall~d\in \mathbb{R}^n.
\end{equation} 
A point   $\bar{x}\in \mathrm{dom}(\Psi)$  satisfying  \eqref{eq:CondCPoint22} is  called  a {\it Pareto critical point} or a {\it stationary point} of problem~\eqref{eq:Problem2}.	
 Given $x, y \in\mathrm{dom}(\Psi)$, it is said that $x$ dominates $y$ when $\Psi(y)-\Psi(x)\in\R^m_+\setminus\{0\}$.
 The function $\Psi$ is said to be {\it convex} on ${\cal C}$ if 
$\Psi\left(\lambda x+(1- \lambda)y\right)\preceq \lambda \Psi(x)+(1- \lambda)\psi(y)$, for all  $x,y\in{\cal C}$, and all $\lambda\in[0,1], $ 
or equivalently,  if each component  $\psi_j$ of  $\Psi$, $j\in {\cal J}$, is a convex function on ${\cal C}$.   We recall that for  a differentiable convex function $\Psi$ on ${\cal C}$  we have   $\psi_{j}(y)-\psi_{j}(x) \geq \left\langle \nabla \psi_{j}(x),~y - x\right\rangle$, for all $x,y\in{\cal C}$ with $x\in  \mathrm{int}(\mathrm{dom}(\Psi))$, for all  $j \in \cal{J}.$ 
Next lemma shows that, in the convex case, the concepts of stationarity and weak Pareto optimality are equivalent, see \cite{Drummond2004}.

\begin{lemma} \label{l:convex}
If $\Psi$ is convex and $\bar{x}$ is a Pareto critical point, then $\bar{x}$ is a weakly Pareto optimal point of problem~\eqref{eq:Problem1}.
\end{lemma}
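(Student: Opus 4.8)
The plan is to argue by contradiction, exploiting the necessary optimality condition \eqref{eq:CondCPoint22} together with the fundamental inequality from Lemma~\ref{mono.der.direc.}. Suppose, to the contrary, that $\bar{x}$ is \emph{not} a weakly Pareto optimal point. By definition this means there exists some $x\in\mathbb{R}^n$ with $\Psi(x)\prec\Psi(\bar{x})$, i.e., $\psi_j(x)<\psi_j(\bar{x})$ for every $j\in\mathcal{J}$. The natural candidate direction to feed into the criticality condition is then $d:=x-\bar{x}$, which points from $\bar{x}$ toward the dominating point $x$.

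Next I would estimate the directional derivative of each component $\psi_j$ along $d$. Since $\Psi$ is convex, each $\psi_j$ is convex, so Lemma~\ref{mono.der.direc.} applies and yields
\begin{equation*}
\psi_j'(\bar{x};d)\leq \psi_j(\bar{x}+d)-\psi_j(\bar{x})=\psi_j(x)-\psi_j(\bar{x})<0,\qquad\forall\, j\in\mathcal{J}.
\end{equation*}
Here the equality uses $\bar{x}+d=x$ and the final strict inequality is exactly the assumed domination. Because this bound holds uniformly in $j$, taking the maximum over $\mathcal{J}$ gives $\max_{j\in\mathcal{J}}\psi_j'(\bar{x};d)<0$, which directly contradicts the criticality inequality \eqref{eq:CondCPoint22} evaluated at this particular $d$. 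The contradiction establishes that $\bar{x}$ must be weakly Pareto optimal.

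I do not anticipate a genuinely hard step here; the argument is short and the entire mechanism is encapsulated in Lemma~\ref{mono.der.direc.}, which converts the first-order (directional-derivative) information in the stationarity hypothesis into a statement about actual function-value decreases. The only point requiring a little care is the well-definedness and finiteness of the directional derivatives $\psi_j'(\bar{x};d)$: one should note that $\bar{x}\in\dom(\Psi)$ (so $\psi_j(\bar{x})<+\infty$) and that $\psi_j(x)<+\infty$, whence the nondecreasing difference quotients of Lemma~\ref{mono.der.direc.} are bounded above by the finite quantity $\psi_j(x)-\psi_j(\bar{x})$, guaranteeing the limit defining $\psi_j'(\bar{x};d)$ exists and stays strictly negative. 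With this observation in place the contradiction is clean, and no convexity beyond what is needed to invoke Lemma~\ref{mono.der.direc.} is required.
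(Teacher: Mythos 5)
Your proof is correct. Note that the paper itself does not prove this lemma at all---it simply cites it to an external reference (Drummond and Iusem)---so there is no in-paper argument to compare against; your contradiction argument via $d=x-\bar{x}$ and Lemma~\ref{mono.der.direc.} is the standard one and is exactly what that reference does. Your side remark about finiteness is handled correctly: by monotonicity of the difference quotients, $\psi_j'(\bar{x};d)$ exists in $[-\infty,+\infty)$ and is bounded above by the strictly negative number $\psi_j(x)-\psi_j(\bar{x})$, which is all the contradiction with \eqref{eq:CondCPoint22} requires (even the value $-\infty$ would do).
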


The next two lemma will be important for the convergence rate results.
\begin{lemma}\cite[Lemma 6]{polyak1987}\label{taxa2}
Let $ \left( a_k \right)_{k \in \mathbb{N}}$ be a nonnegative sequence of real numbers, if $\Gamma a_k ^2 \leq a_k - a_{k+1}$ for some $\Gamma>0$ and for any $k=1, \ldots, \ell$, then $a_\ell \leq {a_0}/({1+\ell \Gamma a_0}) <{1}/({\Gamma \ell})$.
\end{lemma}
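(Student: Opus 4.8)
The plan is to pass to reciprocals and telescope the quadratic-decrease recursion. First I would record that the sequence is non-increasing: since $a_k\ge 0$ gives $\Gamma a_k^2\ge 0$, the hypothesis $\Gamma a_k^2\le a_k-a_{k+1}$ immediately yields $a_{k+1}\le a_k$ for every admissible index $k$. This monotonicity is what will upgrade a crude reciprocal estimate into the clean constant $\Gamma$ below.

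Next I would isolate the degenerate case. If $a_k=0$ for some index $k$, the hypothesis forces $0\le -a_{k+1}$, and nonnegativity then gives $a_{k+1}=0$; inductively all later terms vanish, so in particular $a_\ell=0$. In that situation both asserted bounds hold trivially, since $a_0/(1+\ell\Gamma a_0)\ge 0$ and $1/(\Gamma\ell)>0$. Hence it suffices to treat the case in which every relevant term is strictly positive, which is what makes the reciprocals legitimate.

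The core step is then the reciprocal estimate. For $a_k,a_{k+1}>0$ I would write
\[
\frac{1}{a_{k+1}}-\frac{1}{a_k}=\frac{a_k-a_{k+1}}{a_k a_{k+1}}\ge\frac{\Gamma a_k^2}{a_k a_{k+1}}=\Gamma\,\frac{a_k}{a_{k+1}}\ge\Gamma,
\]
where the first inequality is the hypothesis and the last uses $a_k\ge a_{k+1}>0$ from the monotonicity established above. Summing this telescoping inequality over the relevant range of $k$ collapses the left-hand side and yields $1/a_\ell-1/a_0\ge \ell\Gamma$, that is, $1/a_\ell\ge (1+\ell\Gamma a_0)/a_0$.

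Finally, inverting this bound (both sides being positive) gives $a_\ell\le a_0/(1+\ell\Gamma a_0)$; rewriting the right-hand side as $1/(1/a_0+\ell\Gamma)$ and discarding the strictly positive term $1/a_0$ in the denominator produces the strict bound $a_\ell<1/(\ell\Gamma)$. The argument is otherwise the standard reciprocal-telescoping trick for sequences obeying a quadratic decrease; the only genuine bookkeeping points are the degenerate zero case handled above and fixing the index range over which the hypothesis is summed so that the telescope runs precisely from $a_0$ to $a_\ell$.
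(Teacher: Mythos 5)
The paper does not prove this lemma itself---it is quoted from Polyak's book---and your reciprocal--telescoping argument is exactly the standard proof of that cited result: monotonicity from $\Gamma a_k^2\ge 0$, the estimate $1/a_{k+1}-1/a_k\ge\Gamma$, summation, and inversion are all correct, and the degenerate case $a_k=0$ is handled properly. Your closing remark about the index range is well taken: as stated the hypothesis runs over $k=1,\dots,\ell$ while the conclusion involves $a_0$, so the telescope needs the inequality at $k=0,\dots,\ell-1$; this is an off-by-one in the quoted statement rather than a gap in your argument.
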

\begin{lemma}\cite[Lemma 13.13]{Beck2017}\label{2taxe convv}
Let $ \left( a_k \right)_{k \in \mathbb{N}}$ and  $ \left( b_k \right)_{k \in \mathbb{N}}$ be nonnegative sequences of real numbers satisfying $a_{k+1} \leq a_{k} - b_{k} \beta_k +(A/2)\beta_{k}^2$  for all $k\in \N$,
where $\beta_k = 2/ (k+2)$ and $A$ is a positive number. Suppose that $a_k \leq b_k$ for all $k$. Then
\begin{itemize}
\item[(i)] $a_k \leq (2A)/k$ for all $k\in {\mathbb{N}^*}$; 
\item[(ii)]$\min_{\ell \in\{ \lfloor \frac{k}{2}\rfloor+2, \ldots,k \} }b_\ell \leq 8A/(k-2)$ for all $k=3,4,\ldots,$ where $\lfloor k/2 \rfloor = \max \left\{ n \in \mathbb{N}  \mid n \leq k/2\right\}. $
\end{itemize}
\end{lemma}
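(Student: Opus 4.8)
The plan is to prove the two parts in sequence, with part (ii) building on part (i).

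For part (i), I would argue by induction on $k$. Substituting the hypothesis $a_k \le b_k$ and $\beta_k = 2/(k+2)$ into the recursion and using $\beta_k>0$ gives $a_{k+1} \le (1-\beta_k)a_k + (A/2)\beta_k^2 = \frac{k}{k+2}a_k + \frac{2A}{(k+2)^2}$. The base case $k=1$ follows from the recursion at $k=0$ (where $\beta_0 = 1$), which yields $a_1 \le a_0 - b_0 + A/2 \le A/2 \le 2A$, using $a_0\le b_0$. For the inductive step, assuming $a_k \le 2A/k$, the displayed inequality gives $a_{k+1} \le \frac{2A}{k+2} + \frac{2A}{(k+2)^2} = \frac{2A(k+3)}{(k+2)^2}$, and the desired bound $a_{k+1} \le 2A/(k+1)$ reduces to the elementary inequality $(k+1)(k+3) \le (k+2)^2$, i.e.\ $k^2+4k+3 \le k^2+4k+4$, which always holds.

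For part (ii), I would first rearrange the recursion as $b_\ell \beta_\ell \le a_\ell - a_{\ell+1} + (A/2)\beta_\ell^2$ and sum this over $\ell$ from $p+2$ to $k$, where $p := \lfloor k/2 \rfloor$. The right-hand side telescopes to $a_{p+2} - a_{k+1} + (A/2)\sum_{\ell=p+2}^k \beta_\ell^2 \le a_{p+2} + (A/2)\sum_{\ell=p+2}^k \beta_\ell^2$, since $a_{k+1}\ge 0$. Here part (i) controls the first term via $a_{p+2} \le 2A/(p+2)$, and the quadratic sum is handled by the standard telescoping estimate $\sum_{\ell=p+2}^k 1/(\ell+2)^2 \le \sum_{\ell=p+2}^k 1/((\ell+1)(\ell+2)) = 1/(p+3) - 1/(k+2) \le 1/(p+3)$. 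On the left-hand side, writing $b_* := \min_{p+2 \le \ell \le k} b_\ell$ and using that $\beta_\ell \ge \beta_k = 2/(k+2)$ is decreasing, I bound $\sum_{\ell=p+2}^k b_\ell \beta_\ell \ge b_*\,(k-p-1)\cdot \frac{2}{k+2}$, the factor $k-p-1$ being the number of summands.

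Combining these and using $p = \lfloor k/2\rfloor$ to convert the $p$-dependent quantities into $k$-dependent ones (namely $p+2 \ge (k+3)/2$, $p+3\ge(k+5)/2$, and $k-p-1 \ge (k-2)/2$) would yield $b_* \cdot \frac{k-2}{k+2} \le \frac{4A}{k+3} + \frac{4A}{k+5} \le \frac{8A}{k+3}$, whence $b_* \le \frac{8A(k+2)}{(k-2)(k+3)} \le \frac{8A}{k-2}$ because $(k+2)/(k+3) < 1$. The hardest part is purely bookkeeping: making the floor-function estimates tight enough, and checking they hold for both parities of $k$, so that the constant comes out to exactly $8A/(k-2)$ rather than a weaker bound; the convergence mechanism itself is entirely contained in the telescoping sum and the reuse of part (i).
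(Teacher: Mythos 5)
Your proof is correct: part (i) is the standard induction (with the base case supplied by the $k=0$ recursion where $\beta_0=1$), and part (ii) correctly telescopes over the second half of the indices, reuses part (i) for the boundary term, and the floor-function bookkeeping ($p+2\geq (k+3)/2$, $k-p-1\geq (k-2)/2$) checks out for both parities. The paper itself gives no proof, citing Beck's Lemma 13.13 directly, and your argument is essentially the standard one from that reference, so there is nothing to reconcile.
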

\section{The multiobjetive  composite optimization problem}\label{section of hipotesys}
Throughout our presentation, we  assume  that  $F:=(f_1, \ldots, f_m)$,  where  $f_{j}:=h_{j}+g_{j}$ for all $j\in {\cal J}:=\{1,2,\ldots,m\}$,  satisfies the following three conditions:
\begin{itemize}
	\item [{\bf (A1)}]  The function $h_j$ is differentiable, for all~ $j\in {\cal J}$; 
	\item [{\bf (A2)}] The function  $g_j$  is  proper, convex, and lower semicontinuous, for all~ $j\in {\cal J}$;
	\item [{\bf (A3)}] $\mathrm{dom}(G):=\left\{ x\in \mathbb{R}^n \mid g_j(x)< +\infty, ~j=1,2,\ldots,m \right\}$ is convex and  compact.
\end{itemize}
Since we are assuming  that $\mathrm{dom}(G)$ is  compact, for  future reference we take $\Omega>0$ satisfying 
\begin{equation} \label{eq:diam}
\Omega \geq \diam\left(\mathrm{dom}(G)\right).
\end{equation}

{\it We also consider the following three  additional assumptions, which will be considered only when explicitly stated}.
\begin{itemize}
	\item [{\bf (A4)}] The function  $g_j$  is Lipschitz continuous with constant  $L{g_j}>0$  in $ \mathrm{dom}(g_j)$, for all~ $j\in {\cal J}$;
	\item [{\bf (A5)}] The gradient  $\nabla h_j$ is Lipschitz continuous with  constants $L_j>0$,  for all~ $j\in {\cal J}$, and  
$$
L:=\max\{L_j \mid  j\in {\cal J}\};
$$
	\item [{\bf (A6)}] The function $h_j$, for all $ j \in \cal{J}$, is convex.
\end{itemize}
Before presenting the  method to solve problem~\eqref{eq:Problem1}, we first need  to  study  a gap function associated with this problem, which will play an important role in this work. This study  will be made in next section.

\subsection{The gap function }\label{functiongap}
This section is devoted to study the {\it gap function} $\theta:\mathrm{dom}(G) \to \R$ associated to problem~\eqref{eq:Problem1},  defined by 
\begin{equation} \label{eq:gapFunc}
\theta (x) := \min_{u \in{\mathbb{R}^n}}\max_{j\in {\cal J}}\left(  g_{j}(u) -g_{j}(x)+ \left\langle \nabla  h_j(x),  u-x \right\rangle  \right).
\end{equation}
As we prove below,  the gap function  $\theta (\cdot)$ will serve as a stopping criterion for the algorithm presented in the next section.
 We observe that, if the components of function  $ G $ are the indicator function of a set ${\cal C}$, the gap function $\theta (\cdot)$ in \eqref{eq:gapFunc} becomes   the one presented in \cite{Assunccao2021}.

Clearly, for each $x\in \mathrm{dom}(G)$,   the gap function $ \theta(x)$ is the optimum value  of the optimization  problem 
\begin{equation}\label{sb.prob.solu1}
\min_{u \in \mathbb{R}^n} \max_{j\in \cal{J}}\left( g_{j}(u)-g_{j}(x) +  \left\langle \nabla  h_j(x),  u-x \right\rangle\right).
\end{equation}
It follows from assumptions  (A1)--(A3) that problem~\eqref{sb.prob.solu1} has a solution (possibly not unique) and it belongs to $\mathrm{dom}(G)$. Thus,  we use  the notation $p(x)\in \mathrm{dom}(G)$ when referring to a solution of  problem~\eqref{sb.prob.solu1}, i.e., 
\begin{equation}\label{cnd.subp}
p(x) \in \arg\min_{u\in {\mathbb{R}^n}}\max_{j\in \cal{J}}\left( g_{j}(u)-g_{j}(x) +  \left\langle \nabla  h_j(x),  u-x \right\rangle\right).
\end{equation}
Therefore, combining  \eqref{eq:gapFunc} and \eqref{cnd.subp}, we conclude that
\begin{equation}\label{eq:OptVal}
\theta(x)=\max_{j\in {\cal J}}\left(  g_{j}( p (x) ) -g_{j}(x) + \left\langle \nabla  h_j(x),   p (x) -x  \right\rangle  \right),  \quad \forall~x\in \mathrm{dom}(G).
\end{equation}
To simplify the notations, for each  $x \in \mathrm{dom}(G)$ and $p (x)$ as in \eqref{cnd.subp}, we set  
 \begin{equation*} \label{eq:sdir}
d(x):=p(x)-x. 
\end{equation*} 
 In the following lemma, we show that $\theta (\cdot) $ can in fact be seen as a gap function for problem~\eqref{eq:Problem1}.
 
\begin{lemma}\label{optmalit.theta}
Let  $\theta: \mathrm{dom}(G) \to \mathbb{R}$ be defined  as in \eqref{eq:gapFunc}. Then
	\begin{itemize}
		\item [(i)] $\theta(x) \leq 0$, for all $x \in \mathrm{dom}(G)$;
		\item [(ii)] $\theta(x)=0$ if, and only if, $x$ is a Pareto critical point of problem~\eqref{eq:Problem1};
		\item [(iii)] $\theta (x)$ is upper semicontinuous. 
	\end{itemize}
\end{lemma}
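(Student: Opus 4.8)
The plan is to handle the three items separately, writing $\Phi(x,u):=\max_{j\in{\cal J}}\left(g_j(u)-g_j(x)+\langle\nabla h_j(x),u-x\rangle\right)$ throughout, so that $\theta(x)=\min_{u\in\R^n}\Phi(x,u)=\Phi(x,p(x))$ by \eqref{eq:OptVal}. For part~(i) I would simply test the defining minimization at the feasible choice $u=x$: each summand $g_j(x)-g_j(x)+\langle\nabla h_j(x),0\rangle$ vanishes, so $\Phi(x,x)=0$ and therefore $\theta(x)\le 0$.

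For part~(ii), I first note that by~(i) the equality $\theta(x)=0$ is equivalent to $\Phi(x,u)\ge 0$ for every $u\in\R^n$. For the implication ``critical $\Rightarrow\theta(x)=0$'' I would fix $u$, put $d=u-x$, and invoke Lemma~\ref{mono.der.direc.} to get $g_j(u)-g_j(x)\ge g_j'(x;d)$ for each $j$; adding $\langle\nabla h_j(x),d\rangle=h_j'(x;d)$ and taking the maximum gives $\Phi(x,u)\ge\max_{j\in{\cal J}}f_j'(x;d)\ge 0$, where the last inequality is the criticality condition~\eqref{eq:CondCPoint22}. Hence $\theta(x)\ge 0$, and with~(i) we conclude $\theta(x)=0$. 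For the converse, given an arbitrary direction $d$ I would evaluate the inequality $\Phi(x,u)\ge 0$ at $u=x+\alpha d$ with $\alpha\in(0,1]$, divide by $\alpha$, and let $\alpha\to 0^+$: by the monotonicity in Lemma~\ref{mono.der.direc.} each difference quotient decreases to $g_j'(x;d)$, and since the maximum of finitely many monotone quantities passes to the limit, I obtain $\max_{j\in{\cal J}}f_j'(x;d)\ge 0$, i.e.\ criticality.

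The main obstacle is part~(iii). The decisive idea is to sidestep the (possibly ill-behaved) minimizers $p(x^k)$ by using the single fixed test point $u^*:=p(x)$, which is optimal for $\theta(x)$. Given any sequence $x^k\to x$ in $\mathrm{dom}(G)$, the inequality $\theta(x^k)\le\Phi(x^k,u^*)$ holds by definition of the minimum, so it suffices to prove $\limsup_{k}\Phi(x^k,u^*)\le\Phi(x,u^*)=\theta(x)$. Here I would combine three ingredients: the maximum over the finite index set ${\cal J}$ commutes with $\limsup$; the lower semicontinuity of each $g_j$ from Assumption~(A2) yields $\limsup_{k}(-g_j(x^k))=-\liminf_{k}g_j(x^k)\le -g_j(x)$; and the continuity of $\nabla h_j$ gives $\langle\nabla h_j(x^k),u^*-x^k\rangle\to\langle\nabla h_j(x),u^*-x\rangle$. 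Treating each term this way leads to $\limsup_{k}\Phi(x^k,u^*)\le\max_{j\in{\cal J}}\left(g_j(u^*)-g_j(x)+\langle\nabla h_j(x),u^*-x\rangle\right)=\theta(x)$, which is precisely upper semicontinuity.

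The only delicate points I expect are the justification that the finite maximum commutes with $\limsup$ (proved by a pigeonhole argument selecting an index that attains the maximum infinitely often along a subsequence realizing the upper limit) and the continuity of the gradients $\nabla h_j$; both become routine once the reduction to the fixed test point $u^*=p(x)$ is in place, which is the crux of the argument.
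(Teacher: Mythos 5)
Your proposal is correct and follows essentially the same route as the paper's proof: item (i) by testing $u=x$, item (ii) via Lemma~\ref{mono.der.direc.} in one direction and the limit of difference quotients $\alpha\to 0^+$ in the other, and item (iii) by bounding $\theta(x^k)$ with the fixed test point $p(x)$ and combining the lower semicontinuity of the $g_j$ with the continuity of the gradients $\nabla h_j$. The only cosmetic difference is that in (ii) you establish $\Phi(x,u)\ge 0$ for all $u$ rather than specializing to $d=p(x)-x$ as the paper does, which changes nothing of substance.
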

\begin{proof}
Consider $(i)$ and let $x \in \mathrm{dom}(G)$.  The definition of $\theta (\cdot)$ in \eqref{eq:gapFunc} implies 
	\begin{eqnarray}\label{inq. theta}
	\theta(x) \leq \max_{j\in {\cal J}}\left(g_{j}(u)-g_{j}(x)+ \left\langle \nabla  h_j(x),  u-x \right\rangle \right), \quad \forall u \in \mathbb{R}^n.
	\end{eqnarray}
Thus, letting  $u=x$ in the previous inequality,   we conclude that  $\theta(x)\leq 0$, which proves $(i)$.  To prove item $(ii)$,  we first assume that $x$ is a Pareto critical point of problem~\eqref{eq:Problem1}. Therefore, by \eqref{eq:CondCPoint22}, we obtain
\begin{equation}\label{def.cri.paret.}
\max_{j\in \cal{J}}f_{j}^{\prime}(x;d)\geq0, \quad \forall d \in \mathbb{R}^n.
\end{equation}
Let $ d \in \mathbb{R}^n $  be arbitrary. Using   (A1) and (A2),  we have $f_{j}^{\prime}(x;d)=  g_{j}^{\prime}(x;d) + \left\langle \nabla h_{j}(x),d \right\rangle$. Thus,  it follows from \eqref{def.cri.paret.} that  $\max_{j\in {\cal J}}\{  g_{j}^{\prime}(x;d) +\left\langle \nabla  h_j(x), d \right\rangle\} \geq 0$, for all $d \in \mathbb{R}^n$. Hence,  by Lemma~\ref{mono.der.direc.}, we conclude that $\max_{j\in {\cal J}}\left\{  g_{j}(x+d) -g_{j}(x)+ \left\langle \nabla  h_j(x),  d \right\rangle \right\} \geq 0$. In particular,  letting  $ d = p (x) -x $, we have 
$$
\max_{j\in {\cal J}}\left(  g_{j}( p (x) ) -g_{j}(x) + \left\langle \nabla  h_j(x),   p (x) -x  \right\rangle \right) \geq 0.
$$	
Thus, using  \eqref{eq:OptVal},  we conclude that  $\theta(x)\geq 0$ which, together with item {\it (i)}, gives $\theta(x)=0$. 
Reciprocally, now we assume that $\theta(x)=0$. Thus, as in \eqref{inq. theta}, we obtain 
$$\max_{j\in {\cal J}}\left\{  g_{j}(u)-g_{j}(x) + \left\langle \nabla  h_j(x),  u-x \right\rangle \right\} \geq 0,  \quad \forall u \in \mathbb{R}^n.$$
In particular, letting $u=x+\alpha d$, for $\alpha>0$ and $ d \in \mathbb{R}^n$, we conclude that 
\begin{equation*}
\max_{j\in {\cal J}}\Big( \frac{g_{j}(x+\alpha d)-g_{j}(x)}{\alpha} + \left\langle \nabla  h_j(x), d \right\rangle \Big) \geq 0,  \quad \forall  \alpha>0, ~ \forall  d \in \mathbb{R}^n.
\end{equation*}
Since the  maximum function is continuous and $g_j$ has directional derivative at $x \in \mathrm{dom}(G)$,  we can take  limit as $\alpha$ goes to $0$  in the  last inequality to conclude that $\max_{j\in {\cal J}}\{ g_j'(x, d)+ \left\langle \nabla  h_j(x), d \right\rangle\}\geq 0$, for all $d \in \mathbb{R}^n$. Therefore,  \eqref{eq:CondCPoint22} holds  and thus  $x$ is a Pareto critical point of problem~\eqref{eq:Problem1}.  
We proceed to prove item $(iii)$. Let $x \in \mathrm{dom}(G)$ and consider a sequence $(x^k)_{k\in \mathbb{N}}$ such that $\lim_{k\to \infty}x^k =x$. Since $p(x) \in \mathrm{dom}(G)$, by  \eqref{eq:gapFunc}, we have 
 $$
 \theta(x^k)\leq \max_{j \in \cal{J}} \big(  g_{j}(p(x))-g_{j}(x^k) + \big\langle \nabla  h_j(x^k),  p(x) - x^k \big\rangle \big).
$$
Using the continuity of the maximum function and taking the upper limit in the  last inequality, we have
\begin{equation} \label{eq:scie}
\lim\sup_{k \to \infty} \theta(x^k) \leq  \max_{j \in \cal{J}} \big( g_{j}(p(x))+ \lim \sup_{k\to \infty}\big(-g_{j}(x^k)\big)+\big\langle \nabla  h_j(x),  p(x) - x \big\rangle\big). 
\end{equation}
On the other hand,  considering that  $g_j$  is lower semicontinuous in its effective domain, we obtain $\lim \sup_{k\to \infty}\left[-g_{j}(x^k)\right]\leq  - g_{j}(x)$. Therefore, combining this inequality with  \eqref{eq:scie} and \eqref{eq:OptVal},  we have $\lim \sup_{k \to \infty} \theta(x^k) \leq  \theta(x),$ which concludes the proof. 
\end{proof}

Hereafter, we denote:
$$e:=(1, \ldots, 1)^T\in {\mathbb R}^m.$$
When there is no confusion, we will also use letter $e$ to denote the column vector of ones with an alternative dimension.
In the following lemma, we present the counterpart of \cite[Lemma~1]{Assunccao2021} for $F$ being a composite function as defined in  \eqref{eq:Problem1}. Note that we assume that only the second component of $F$  has coordinates  with Lipschitz gradients. 

\begin{lemma}\label{lemma.desc.}
 Assume that  $F$ satisfies (A5).
Let  $x \in \mathrm{dom}(G)$ and $\lambda \in [0,1]$. Then
	\begin{equation}\label{eq. lemma 5}
	F(x+\lambda[p(x)-x]) \preceq F(x)+\Big( \lambda \theta (x)+\frac{L}{2}\|p(x)-x\|^2 \lambda^2\Big)  e.
	\end{equation}
\end{lemma}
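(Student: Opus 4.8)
The plan is to prove the vector inequality componentwise: I show that for each $j\in {\cal J}$,
$$f_j\big(x+\lambda[p(x)-x]\big)\leq f_j(x)+\lambda\theta(x)+\tfrac{L}{2}\|p(x)-x\|^2\lambda^2,$$
since $\preceq$ means exactly this for every coordinate. Throughout I write $p:=p(x)$ and $d:=p(x)-x$, and I record the key structural fact that, because $\lambda\in[0,1]$, the evaluation point is a convex combination, $x+\lambda d=(1-\lambda)x+\lambda p$, with both $x$ and $p$ lying in $\mathrm{dom}(G)$. The argument then splits $f_j=h_j+g_j$ into its smooth and nonsmooth parts and recombines.

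First I would treat the smooth part $h_j$. By assumption (A5) the gradient $\nabla h_j$ is Lipschitz with constant $L_j\leq L$, so the standard descent lemma (a consequence of (A5)) yields $h_j(x+\lambda d)\leq h_j(x)+\lambda\langle\nabla h_j(x),d\rangle+\tfrac{L_j}{2}\lambda^2\|d\|^2$, and bounding $L_j$ by the global constant $L=\max_j L_j$ gives $h_j(x+\lambda d)\leq h_j(x)+\lambda\langle\nabla h_j(x),d\rangle+\tfrac{L}{2}\lambda^2\|d\|^2$. Next I would treat the nonsmooth part $g_j$: since $g_j$ is convex by (A2) and $x+\lambda d=(1-\lambda)x+\lambda p$, convexity gives $g_j(x+\lambda d)\leq(1-\lambda)g_j(x)+\lambda g_j(p)=g_j(x)+\lambda\big(g_j(p)-g_j(x)\big)$.

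Adding the two estimates produces
$$f_j(x+\lambda d)\leq f_j(x)+\lambda\Big(g_j(p)-g_j(x)+\langle\nabla h_j(x),p-x\rangle\Big)+\tfrac{L}{2}\lambda^2\|d\|^2.$$
The crucial observation is that the parenthesized quantity is precisely the $j$-th term appearing inside the maximum that defines $\theta(x)$ in \eqref{eq:OptVal}; hence it is bounded above by $\theta(x)=\max_{j\in{\cal J}}\big(g_j(p)-g_j(x)+\langle\nabla h_j(x),p-x\rangle\big)$. Substituting this bound gives the desired componentwise inequality, valid for every $j\in{\cal J}$, which is exactly \eqref{eq. lemma 5}.

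There is no genuinely hard step here; the argument is a routine combination of the descent lemma and convexity. The only two points that require care are (a) using the hypothesis $\lambda\in[0,1]$ so that $x+\lambda d$ is a legitimate convex combination of $x$ and $p$, which is what licenses the convexity bound on $g_j$; and (b) uniformly replacing each individual Lipschitz constant $L_j$ by the single global constant $L=\max_j L_j$, so that one scalar works in every coordinate simultaneously and the quadratic error term collapses into the same multiple of $e$ across all components.
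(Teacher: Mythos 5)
Your proof is correct and follows essentially the same route as the paper's: split $f_j$ into $h_j$ and $g_j$, apply the descent lemma under (A5) to $h_j$, use convexity of $g_j$ on the convex combination $(1-\lambda)x+\lambda p(x)$, and bound the resulting linearized term by $\theta(x)$ via \eqref{eq:OptVal} while replacing $L_j$ with $L=\max_j L_j$. No gaps to report.
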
	
\begin{proof}
Let $j\in {\cal J}$. Since   $h_j$ has gradient Lipschitz continuous with constant $L_j$, $x \in \mathrm{dom}(G)$ and  $\lambda \in [0,1]$, we have
$$
f_j(x+\lambda[p(x) - x])\leq g_j((1-\lambda)x+\lambda g_j(p(x)) +  h_j(x)+\lambda\langle \nabla h_j(x), (p(x) -x)\rangle+\frac{L_j}{2}\|p(x) -x\|^2\lambda^2 . 
$$
Considering that $g_j$ is convex, we have $g_j((1-\lambda)x+\lambda p)\leq   (1-\lambda)g_j(x)+\lambda g_j(p)$. Thus,  combining this two previous inequalities, after some algebraic manipulations, we obtain 
$$
f_j(x+\lambda[p(x) - x])\leq f_j(x)+\lambda\left[ \langle \nabla h_j(x), (p(x) -x)\rangle -g_j(x)+g_j(p(x))\right] +\frac{L_j}{2}\|p(x) -x\|^2\lambda^2.
$$
Therefore, by \eqref{eq:OptVal} and due to $L=\max\{L_j:~j=1, \ldots m\}$, we have
$$
f_j(x+\lambda[p(x) - x])\leq f_j(x)+\lambda\theta(x) +\frac{L}{2}\|p(x) -x\|^2\lambda^2.
$$
Since the last inequality holds for all $j=1, \ldots, m$, then \eqref{eq. lemma 5} follows.
\end{proof}
\section{The generalized conditional gradient method}\label{algorithm}

In this section, we introduce a generalization of the conditional gradient method, also known as Frank-Wolfe algorithm, to solve multiobjective composite optimization problems.
We will also study asymptotic convergence properties and iteration-complexity bounds for the sequence generated by this method.
The analysis is carried out with three different step size strategies, namely, Armijo type, adaptive and diminishing step sizes. 
The conceptual method is described in Algorithm~\ref{Alg:CondG} below.

\bigskip
 \hrule\hrule \vspace{-5pt}
\begin{algorithm} 
{\bf Generalized  CondG method} \label{Alg:CondG} \\ 
\vspace{-8pt}\hrule\hrule
	\begin{description}	
\item[Step 0.] Choose $x^0\in $  $ \mathrm{dom}(G)$ and initialize  $k\gets 0$.
\item [Step 1.]  Compute an optimal solution $p(x^k)$ and the optimal value $\theta(x^k)$ as follows 
\begin{align}\label{eq:opt de p}
		p(x^k) &\in \arg\min_{u\in {\mathbb{R}^n}}\max_{j\in {\cal J}}\big{(}g_{j}(u)-g_{j}(x^k)+ \big\langle \nabla  h_j(x^k),  u - x^k \big\rangle \big{)}, \\
		\theta(x^k)&=\max_{j \in \cal{J}}\big{(} g_{j}(p(x^k))-g_{j}(x^k)+\big\langle \nabla  h_j(x^k),  p(x^k) - x^k \big\rangle  \big{)}. \label{eq:op.theta}
\end{align}
\item[ Step 2.]  If $\theta(x^k)= 0$, then {\bf stop}.    
\item[ Step 3.] Compute $\lambda_k \in (0, 1]$  and set 
\begin{equation}\label{eq:iteration}
x^{k+1}:=x^k+ \lambda_k (p(x^k)-x^k).
\end{equation}
\item[ Step 4.] Set $k\gets k+1$ and go to {\bf Step 1}.
	\end{description}
	\hrule
	\hrule
	\bigskip
\end{algorithm}

\begin{remark}
Let  $G: \mathbb{R}^n\to \overline{\mathbb{R}}^m$  be the indicator function of the set  ${\cal C}\subset \R^{n}$  in the multiobjective context, i.e., for all $j\in\J$, we have  $g_j(x)=0$ for all $ x \in {\cal{C}}$, and $g_j(x) =+\infty$  for all $ x \notin {\cal{C}}$. Then, assumptions (A2)--(A4) concerning $G$  are satisfied. Furthermore, Algorithm~\ref{Alg:CondG} merges into \cite[Algorithm~1]{Assunccao2021}.
 \end{remark}

As a consequence of Lemma~\ref{optmalit.theta},  Algorithm~\ref{Alg:CondG} successfully stops if a Pareto critical point is found. Thus, from now on, we assume,  without loss of generality, that  $\theta(x^k) <0$ for all $k\geq 0$ and therefore  an infinite sequence $(x^k)_{k \in \mathbb{N}}$ is generated by Algorithm~\ref{Alg:CondG}.
 We will analyze the generated sequence with three  step size strategies. We begin  by presenting  the Armijo-type step size.

\begin{Armijo}
	Let $\zeta \in(0,1)$ and $0<\omega_1<\omega_2<1$. The step size  $\lambda_k$ is  chosen  according the following line search algorithm:
\begin{description}
\item[Step LS0.] Set $\lambda_{k_{0}}=1$ and initialize $\ell \gets 0$.
\item[Step LS1.]   If 
$
F( x^k+ \lambda_{k_{\ell}} [p(x^k)-x^k])\preceq  F(x^{k}) - \zeta \lambda_{k_{\ell}} |\theta(x^k)| e,
$
then set    $\lambda_k:= \lambda_{k_{\ell}}$ and return to the main algorithm.
\item[Step LS2.]  Find $ \lambda_{k_{\ell+1}}\in [\omega_1  \lambda_{k_{\ell}} , \omega_2  \lambda_{k_{\ell}}]$, set $\ell \gets \ell + 1$,  and go to Step~LS1.
\end{description}
\end{Armijo}

The  second step size strategy is classical in the analysis of the scalar conditional gradient method, see for example \cite{BeckTeboulle2004}.

\begin{Adaptive}
Assume that  $F:=\left(f_1, \ldots, f_m\right)$ satisfies (A5) (and thus  \eqref{eq. lemma 5} in Lemma~\ref{lemma.desc.}). Define the step size as
\begin{equation}\label{eq:fixed.step}
\lambda_k:=\min\left\{1, \frac{|\theta(x^k)|}{L \|p(x^k)-x^k\|^2}\right\}=\argmin_{\lambda \in (0,1]}\Big( -|\theta(x^k)| \lambda+\frac{L }{2} \|p(x^k) -x^k\|^2 \lambda^2 \Big). 
\end{equation}
\end{Adaptive}
Since $\theta(x)<0$ and $p(x)\neq x$ for non-stationary points, the adaptive step size is well defined.  Next we present   the third  step size, which is well known   in the study of scalar conditional gradient method,   see for example \cite{Jaggi2013}.
\begin{diminishing}
Define the step size as
	\begin{equation} \label{eq:dimstep}
	\lambda_{k}:=\frac{2}{k+2}.
	\end{equation} 	
\end{diminishing}

\subsection{Convergence analysis  using Armijo step sizes} 
In this section, we analyze the sequence $(x^k)_{k \in \mathbb{N}}$ generated by Algorithm~\ref{Alg:CondG} with Armijo step sizes.  We begin by showing that the Armijo step  size strategy is well defined.  First, notice that assumptions (A2)--(A3) imply that  $p(x^k)  \in \mathrm{dom}(G)$  and   $\theta(x^k)$ in \eqref{eq:opt de p}   and  \eqref{eq:op.theta}, respectively,  are well defined. 
\begin{proposition}
	Let $\zeta \in (0,1)$, $x^k \in \mathrm{dom}(G)$, $p(x^k)$ and $\theta(x^k)$ as in \eqref{eq:opt de p} and \eqref{eq:op.theta},  respectively. Then, there exists $0<\bar{\eta}\leq1$ such that
	\begin{equation}\label{eq.de arm.}
		F(x^k+\eta[p(x^k)-x^k])\preceq F(x^k)-\zeta \eta|\theta(x^k)|e,  \quad \forall \eta \in (0,\bar{\eta}]. 
	\end{equation}
\end{proposition}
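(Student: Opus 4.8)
The plan is to reduce the vector inequality \eqref{eq.de arm.} to $m$ scalar ones and verify each by a first-order expansion, using only the convexity of $g_j$ and the differentiability of $h_j$ granted by (A1)--(A2). Throughout I write $d:=p(x^k)-x^k$ and recall that, since $x^k$ is non-stationary, $\theta(x^k)<0$, so $|\theta(x^k)|=-\theta(x^k)>0$. Because $\preceq$ is defined coordinatewise, it suffices to exhibit, for each fixed $j\in{\cal J}$, a threshold $\bar\eta_j\in(0,1]$ with $f_j(x^k+\eta d)\leq f_j(x^k)-\zeta\eta|\theta(x^k)|$ for all $\eta\in(0,\bar\eta_j]$; then $\bar\eta:=\min_{j\in{\cal J}}\bar\eta_j$ does the job, as ${\cal J}$ is finite.

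Fix $j$. The key step is to bound $f_j(x^k+\eta d)-f_j(x^k)$ from above by $\eta\theta(x^k)+o(\eta)$. I would split $f_j=g_j+h_j$. For the convex part, since $x^k+\eta d=(1-\eta)x^k+\eta\,p(x^k)$ for $\eta\in[0,1]$, convexity of $g_j$ gives $g_j(x^k+\eta d)-g_j(x^k)\leq\eta[g_j(p(x^k))-g_j(x^k)]$. For the smooth part, differentiability of $h_j$ at $x^k$ along the fixed direction $d$ yields $h_j(x^k+\eta d)-h_j(x^k)=\eta\langle\nabla h_j(x^k),d\rangle+o(\eta)$. Adding the two bounds and noting that $g_j(p(x^k))-g_j(x^k)+\langle\nabla h_j(x^k),d\rangle$ is exactly the $j$-th entry of the maximum defining $\theta(x^k)$ in \eqref{eq:op.theta}, I get $f_j(x^k+\eta d)-f_j(x^k)\leq\eta\theta(x^k)+o(\eta)=-\eta|\theta(x^k)|+o(\eta)$.

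It remains to absorb the remainder into the slack created by $\zeta<1$. The target inequality will follow from $-\eta|\theta(x^k)|+o(\eta)\leq-\zeta\eta|\theta(x^k)|$, equivalently $o(\eta)/\eta\leq(1-\zeta)|\theta(x^k)|$ after dividing by $\eta>0$. Since the right-hand side is a fixed positive constant while $o(\eta)/\eta\to0$ as $\eta\to0^+$, such a $\bar\eta_j\in(0,1]$ exists. I expect the only delicate point to be the bookkeeping of this remainder: one must be sure the $o(\eta)$ term is genuinely $o(\eta)$ (this is precisely differentiability of $h_j$ at $x^k$), and that the positivity of $(1-\zeta)|\theta(x^k)|$ — guaranteed by $\zeta<1$ together with non-stationarity — is what keeps $\bar\eta_j$ bounded away from $0$. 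Notably, no compactness or Lipschitz hypothesis enters, so (A4)--(A6) are not used and only (A1)--(A2) are invoked, consistent with the hypotheses of the proposition.
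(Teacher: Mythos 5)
Your proof is correct and follows essentially the same route as the paper's: convexity of $g_j$ to bound the nonsmooth part along the segment toward $p(x^k)$, a first-order expansion of $h_j$ for the smooth part, identification of the resulting bracket with an entry of the max defining $\theta(x^k)$, and absorption of the $o(\eta)$ remainder into the slack $(1-\zeta)|\theta(x^k)|>0$. The only difference is cosmetic — you argue componentwise and take a minimum over the finite index set, while the paper runs the identical estimate in vector form via the Jacobian of $H$.
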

\begin{proof}
Since $H$ is differentiable, $G$ is convex,  $x^k  \in \mathrm{dom}(G)$, and $p(x^k) \in \mathrm{dom}(G)$, we conclude, for all $\eta \in (0,1)$, that
	\begin{eqnarray*}
	F(x^k+\eta[p(x^k)-x^k])&=&G(x^k+\eta[p(x^k)-x^k])+ H(x^k+\eta[p(x^k)-x^k])\nonumber\\
	&\preceq&  (1-\eta)G(x^k) +  \eta G(p(x^k))+H(x^k)+\eta JH(x^k)(p(x^k)-x^k)+ \frac{o(\eta)}{\eta}e .
	\end{eqnarray*}	
where $JH(x^k)$ denotes the Jacobian of $H$ at $x^k$. After some arrangement in the right hand side of the last inequality, we obtain 
\[F(x^k+\eta[p(x^k)-x^k])=  F(x^k)+\eta\Big( JH(x^k)(p(x^k)-x^k)+ G(p(x^k))-G(x^k)  \Big)+ \frac{o(\eta)}{\eta}e.\]
	Since $JH(x^k)(p(x^k)-x^k)\preceq \max_{j\in {\cal J}}\left\langle \nabla  h_j(x^k),  p(x^k)-x^k \right\rangle   e$, using \eqref{eq:op.theta}, we obtain  	
	\begin{equation*}\label{des.conv.diff}
	F(x^k+\eta[p(x^k)-x^k]) \preceq F(x^k)+\eta\zeta \theta(x^k)e +\eta\Big((1-\zeta)\theta(x^k)+\frac{o(\eta)}{\eta}\Big)e .  
	\end{equation*}
Terefore, considering that  $\theta(x^k) <0$, $\zeta \in (0,1)$, and  $\lim_{\eta \to 0}o(\eta)/\eta =0$, there exists $\bar{\eta}>0$ such that \eqref{eq.de arm.} holds for all $\eta \in (0,\bar{\eta}]$, concluding the proof. 
\end{proof}

In the following, we present our first asymptotic convergence result. It is worth noting that we only assume (A1)--(A3).

\begin{theorem}
Let $(x^k)_{k \in \mathbb{N}}$ be the sequence generated by Algorithm~\ref{Alg:CondG} with Armijo step sizes.
	Then, every limit point $\bar{x}$ of $(x^k)_{k \in \mathbb{N}}$ is a Pareto critical point for  problem~\eqref{eq:Problem1}.
\end{theorem}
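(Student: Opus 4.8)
The plan is to show that every limit point $\bar x$ satisfies $\theta(\bar x)=0$; by Lemma~\ref{optmalit.theta}(ii) this is exactly Pareto criticality, and since $\theta(\bar x)\le 0$ always holds by item (i), it suffices to prove $\theta(\bar x)\ge 0$. First I would record the global features of the sequence. Each $x^{k+1}=(1-\lambda_k)x^k+\lambda_k p(x^k)$ is a convex combination of two points of $\mathrm{dom}(G)$, so by (A3) the whole sequence stays in the compact convex set $\mathrm{dom}(G)$; in particular any limit point $\bar x$ lies in $\mathrm{dom}(G)$ and $\|p(x^k)-x^k\|\le\Omega$. Fix $j\in\J$. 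The Armijo acceptance rule gives $f_j(x^{k+1})\le f_j(x^k)-\zeta\lambda_k|\theta(x^k)|$, so $(f_j(x^k))_k$ is non-increasing; since $h_j$ is continuous and $g_j$ is lower semicontinuous on the compact set $\mathrm{dom}(G)$, $f_j$ is bounded below there, hence $(f_j(x^k))_k$ converges. Telescoping yields $\zeta\sum_k\lambda_k|\theta(x^k)|\le f_j(x^0)-\lim_k f_j(x^k)<\infty$, and therefore $\lambda_k|\theta(x^k)|\to 0$.

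Now let $\bar x$ be a limit point, say $x^k\to\bar x$ for $k\in K\subsetinf\N$, and split according to the behaviour of $(\lambda_k)_{k\in K}$. If $\limsup_{k\in K}\lambda_k>0$, I pass to a subsequence with $\lambda_k\ge\delta>0$; then $\lambda_k|\theta(x^k)|\to 0$ forces $|\theta(x^k)|\to 0$, and upper semicontinuity of $\theta$ (Lemma~\ref{optmalit.theta}(iii)) gives $\theta(\bar x)\ge\limsup\theta(x^k)=0$, as desired.

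The substantive case is $\lambda_k\to 0$ along $K$. Here, for $k$ large enough $\lambda_k<1$, so the line search backtracked and the immediately preceding trial $\tilde\lambda_k\in[\lambda_k/\omega_2,\lambda_k/\omega_1]$ failed the Armijo test; hence $\tilde\lambda_k\to 0$ and at least one coordinate is violated. Passing to a subsequence on which the violated index $j_0$ is constant, for $d^k:=p(x^k)-x^k$ I have $f_{j_0}(x^k+\tilde\lambda_k d^k)>f_{j_0}(x^k)-\zeta\tilde\lambda_k|\theta(x^k)|$. Using convexity of $g_{j_0}$ to bound $g_{j_0}(x^k+\tilde\lambda_k d^k)\le(1-\tilde\lambda_k)g_{j_0}(x^k)+\tilde\lambda_k g_{j_0}(p(x^k))$ and dividing by $\tilde\lambda_k>0$, I obtain
\[
g_{j_0}(p(x^k))-g_{j_0}(x^k)+\frac{h_{j_0}(x^k+\tilde\lambda_k d^k)-h_{j_0}(x^k)}{\tilde\lambda_k}>\zeta\theta(x^k).
\]
Writing $T_k:=g_{j_0}(p(x^k))-g_{j_0}(x^k)+\langle\nabla h_{j_0}(x^k),d^k\rangle\le\theta(x^k)$ (the $j_0$-th entry of the maximum in \eqref{eq:op.theta}) and $\epsilon_k$ for the difference between the quotient above and $\langle\nabla h_{j_0}(x^k),d^k\rangle$, the display becomes $T_k+\epsilon_k>\zeta\theta(x^k)$. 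By the mean value theorem $\epsilon_k=\langle\nabla h_{j_0}(\xi^k)-\nabla h_{j_0}(x^k),d^k\rangle$ with $\xi^k$ on the segment $[x^k,x^k+\tilde\lambda_k d^k]$; since $\tilde\lambda_k\to 0$, $\|d^k\|\le\Omega$ and $\nabla h_{j_0}$ is continuous, $\epsilon_k\to 0$. Combining $T_k\le\theta(x^k)$ with $T_k+\epsilon_k>\zeta\theta(x^k)$ yields $(1-\zeta)\theta(x^k)>-\epsilon_k$, whence $\liminf\theta(x^k)\ge 0$; together with $\theta(x^k)\le 0$ this gives $\theta(x^k)\to 0$, and upper semicontinuity gives $\theta(\bar x)=0$ as before. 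In either case Lemma~\ref{optmalit.theta}(ii) then identifies $\bar x$ as a Pareto critical point.

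The hard part will be the backtracking case $\lambda_k\to 0$: the point is to extract the failed trial step $\tilde\lambda_k$ together with a constant violated index $j_0$, and then to show that the error term $\epsilon_k$ coming from replacing the finite difference of $h_{j_0}$ by its directional derivative vanishes. This is exactly where continuity of $\nabla h_{j_0}$ is needed — the same ingredient already used implicitly in the proof of Lemma~\ref{optmalit.theta}(iii) — since pointwise differentiability alone would not control $\epsilon_k$ uniformly along the moving points $x^k$.
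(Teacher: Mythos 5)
Your proposal is correct and follows essentially the same route as the paper: telescope the Armijo decrease to get $\lambda_k\theta(x^k)\to 0$, split on whether the step sizes vanish, and in the backtracking case exploit a failed trial step together with convexity of $g_{j_0}$ and first-order behaviour of $h_{j_0}$ to force $\theta(x^k)\to 0$, concluding via upper semicontinuity and Lemma~\ref{optmalit.theta}. Your handling of the remainder term via the mean value theorem and continuity of $\nabla h_{j_0}$ (rather than a pointwise $o(\bar{\lambda}_k\|p(x^k)-x^k\|)$ attached to the moving base points $x^k$) is in fact the more careful way to justify that step, and your direct argument replaces the paper's proof by contradiction without changing the substance.
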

\begin{proof}
Let $\bar{x} \in \mathrm{dom}(G)$ be a limit point of the sequence $(x^k)_{k \in \mathbb{N}}$ generated by Algorithm~\ref{Alg:CondG} and $ \mathbb{K} \subsetinf \mathbb{N} $ such that  $\lim_{k \in \mathbb{K}}x^k =\bar{x}$. 
It follows from the Armijo step size strategy that
	\begin{equation} \label{eq:faieq}
	0 \prec-\zeta \lambda_{k}\theta(x^k)e\preceq F(x^k)-	F(x^{k+1}),  \quad \forall k \in \mathbb{N},
	\end{equation} 
because	$\theta(x^k) <0$  for all $k\in \N$.
Consequently,  the sequence of functional values $(F (x^k)) _ {k \in  \mathbb{N}}$ is monotone decreasing. Moreover, since $F$ is continuous, we have $\lim_{k \in \mathbb{K}} F(x^{k}) = F(\bar{x})$.  Hence,   $(F (x^k)) _ {k \in  \mathbb{N}}$ converges  and   $ \lim_{k \in \mathbb{N}}[F(x^k)-F(x^{k+1})]=0.$ Thus,  \eqref{eq:faieq} implies    $\lim_{k \in \mathbb{N}} \lambda_{k} \theta(x^k)=0$ and, {\it a fortiori}, $\lim_{k \in \mathbb{K}} \lambda_{k} \theta(x^k)=0$. 
Therefore, there exists $\K_1\subsetinf\K$ such that at least one of the two following possibilities holds: $\lim_{k \in \mathbb{K}_1} \theta(x^k) =0$ or $\lim_{k\in \mathbb{K}_1} \lambda_{k} =0$. In case $\lim_{k \in \mathbb{K}_1} \theta(x^k) =0$, using Lemma~\ref{optmalit.theta}, we obtain $\theta(\bar{x}) = 0$, which implies that $\bar{x}$ is a Pareto critical point. Now consider the case $\lim_{k\in \mathbb{K}_1} \lambda_{k} =0$. Suppose by contradiction that  $ \theta (\bar {x}) < 0 $.   Since $\theta(\cdot)$ is  upper semicontinuous, $\lim_{k \in \mathbb{K}_1}x^k =\bar{x}$,  $\theta(\bar{x})<0$, and $\lim_{k\in \mathbb{K}_1} \lambda_{k} =0$,  there exist $\delta >0$ and  $\mathbb{K}_2 \subsetinf \mathbb{K}_1$ such that  
\begin{equation} \label{eq:ithetak}
\theta(x^k)<-\delta, \quad \forall  k\in \mathbb{K}_2, 
\end{equation} 
and also $\lambda_{k} <1 $ for all $ k\in \mathbb{K}_2$. Moreover, since $(p(x^k))_{k \in \mathbb{N}}\subset \mathrm{dom}(G)$ and  $\mathrm{dom}(G)$ is  compact, we assume,  without loss of generality, that there exists  $\bar{p} \in \mathrm{dom}(G)$ such that 
\begin{equation} \label{eq:limp}
\lim_{k\in \mathbb{K}_2} p(x^k) = \bar{p}.
\end{equation}  
Since $\lambda_{k} <1 $ for all $ k\in \mathbb{K}_2$, by the Armijo step size strategy, there exists $\bar{\lambda}_{k} \in (0,\lambda_k /\omega_1]$ such that
	$$
	F(x^k +\bar{\lambda}_{k}[p(x^k)-x^k]) \npreceq  F(x^k) + \zeta \bar{\lambda}_{k} \theta(x^k) e, \quad   \forall k\in \mathbb{K}_2,
	$$  
	which means that
	$$
	f_{j_k} ( x^k + \bar{\lambda}_{k}[p(x^k)-x^k]) > f_{j_k}(x^k) + \zeta \bar{\lambda}_{k} \theta(x^k), \quad   \forall k\in \mathbb{K}_2,
	$$
	for at least one $j_k \in \cal{J}$. Since $\cal{J}$ is finite set of indexes and  $\mathbb{K}_2$  is infinite, there exist $j^* \in \cal{J}$ and $\mathbb{K}_3\subsetinf \mathbb{K}_2$ such that
	\begin{equation}\label{eq.23}
	\displaystyle\frac{f_{j^*} ( x^k + \bar{\lambda}_{k}[p(x^k)-x^k])-f_{j^*}(x^k)  }{\bar{\lambda}_{k}}>\zeta  \theta(x^k),  \quad   \forall k\in \mathbb{K}_3.
	\end{equation}
On the other  hand,  owing to $0<\bar{\lambda}_{k}\leq 1$ and $g_{j^*}$ be  convex, we can apply  Lemma~\ref{mono.der.direc.} to obtain  
\begin{equation}\label{eq:f23}
\frac{g_{j^*}(x^k +\bar{\lambda}_{k}[p(x^k)-x^k])-g_{j^*}(x^k) }{\bar{\lambda}_{k}}\leq g_{j^*}(p(x^k))-g_{j^*}(x^k),  \quad   \forall k\in \mathbb{K}_3. 
\end{equation}
Moreover, due to $h$ be differentiable and $\lim_{k\in\K_3}\bar{\lambda}_{k}=0$,   we have, for all $k\in\K_3$,
\begin{equation}\label{eq:s23}
\bar{\lambda}_{k}  \langle \nabla h_{j^*}(x^k), p(x^k) - x^k \rangle =  h_{j^*} \left( x^k + \bar{\lambda}_{k}[p(x^k)-x^k]\right)-h_{j^*}(x^k)  - o(\bar{\lambda}_{k}\|p(x^k)-x^k\|). 
\end{equation}
Combining  \eqref{eq:op.theta} with \eqref{eq:f23} and \eqref{eq:s23}, after some algebraic manipulations, we obtain
	\begin{eqnarray} \label{equation24}
	\theta(x^k)&\geq& g_{j^*}(p(x^k))-g_{j^*}(x^k) + \langle \nabla h_{j^*}(x^k), p(x^k) - x^k \rangle \nonumber\\
	&\geq& \frac{f_{j^*} \left( x^k + \bar{\lambda}_{k}[p(x^k)-x^k]\right)-f_{j^*}(x^k)  }{\bar{\lambda}_{k} } - \frac{o(\bar{\lambda}_{k}\|p(x^k)-x^k\|)}{\bar{\lambda}_{k}},  \quad   \forall k\in \mathbb{K}_3. 
	\end{eqnarray}
Hence, \eqref{eq.23} and \eqref{equation24}  imply  that 
	\begin{equation}\label{eq.25}
	\frac{f_{j^*} \left( x^k + \bar{\lambda}_{k}[p(x^k)-x^k]\right)-f_{j^*}(x^k)  }{\bar{\lambda}_{k}}> \left(\frac{-\zeta}{1-\zeta} \right) \frac{o(\bar{\lambda}_{k}\|p(x^k)-x^k\|)}{\bar{\lambda}_{k}}, \quad   \forall k\in \mathbb{K}_3. 
	\end{equation}
On the other hand, it follows from \eqref{eq:ithetak} and \eqref{equation24} that 
	$$
	-\delta + \frac{o(\bar{\lambda}_{k}\|p(x^k)-x^k\|)}{\bar{\lambda}_{k}}> \frac{f_{j^*} \left( x^k + \bar{\lambda}_{k}[p(x^k)-x^k]\right)-f_{j^*}(x^k)  }{\bar{\lambda}_{k} },  \quad   \forall k\in \mathbb{K}_3. 
	$$
Combining  the last inequality with   \eqref{eq.25},  we have
	$$
	-\delta - \frac{o(\bar{\lambda}_{k}\|p(x^k)-x^k\|)}{\bar{\lambda}_{k}}>\left(\frac{-\zeta}{1-\zeta} \right) \frac{o(\bar{\lambda}_{k}\|p(x^k)-x^k\|)}{\bar{\lambda}_{k}}, \quad   \forall k\in \mathbb{K}_3.
	$$
Considering \eqref{eq:limp} and taking limits for $k\in\K_3$ on both sides of this inequality, we obtain  $-\delta \geq 0$, which is a contradiction  with  $\delta>0.$ Therefore,    $\theta(\bar{x}) =0$ and, from Lemma~\ref{optmalit.theta}~$(ii)$, we conclude that $\bar{x}$ is a Pareto critical point.
 \end{proof}
 
 To state  the following results, we introduce some  notations.  Since  $\mathrm{dom}(G)$ is a compact set and $\nabla h$ is continuous, we set 
\begin{equation} \label{eq:omegarho}
	  \rho:=\sup\{\|\nabla h_{j}(x)\| \mid x \in \mathrm{dom}(G),  ~j=1, \ldots, m\}.  
\end{equation} 
Moreover, considering that $g_j$ satisfies (A4),  we define 
\begin{equation} \label{eq:lm}
L_G:=\max \left\{L{g_j} \mid  j=1, \ldots, m\right\}>0, \quad \gamma:= \min \left\{  \frac{1}{(\rho + L_G) \Omega}, ~\frac{2\omega_1 (1-\zeta)}{L\Omega^2}\right\}.
\end{equation}

\begin{lemma}\label{le:bla}
Assume that $F$ satisfies (A4)--(A5). Then  $\lambda_{k} \geq   \gamma  |\theta(x^k)|>0$, for all $\in \N$.
\end{lemma}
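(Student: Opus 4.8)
The plan is to combine two facts that hold at every non-stationary iterate $x^k$ (recall we assume $\theta(x^k)<0$, so $|\theta(x^k)|>0$): a uniform upper bound on $|\theta(x^k)|$ coming from (A4) together with the compactness of $\mathrm{dom}(G)$, and a lower ``acceptance threshold'' for the step sizes that are guaranteed to pass the Armijo test, coming from the descent estimate of Lemma~\ref{lemma.desc.} (which is where (A5) enters). With these in hand, I would split the argument according to whether or not the unit step is accepted.

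First I would record the bound $|\theta(x^k)| \le (\rho+L_G)\Omega$. Letting $j^*$ attain the maximum in \eqref{eq:OptVal}, the triangle and Cauchy--Schwarz inequalities give $|\theta(x^k)|\le |g_{j^*}(p(x^k))-g_{j^*}(x^k)| + \|\nabla h_{j^*}(x^k)\|\,\|p(x^k)-x^k\|$; applying the Lipschitz bound (A4) to the first term, the definition \eqref{eq:omegarho} of $\rho$ to the second, and $\|p(x^k)-x^k\|\le\Omega$ (both points lie in $\mathrm{dom}(G)$, and \eqref{eq:diam} holds) yields the claim. Next I would identify the threshold: matching the Armijo test $F(x^k+\lambda[p(x^k)-x^k])\preceq F(x^k)-\zeta\lambda|\theta(x^k)|e$ against the estimate of Lemma~\ref{lemma.desc.} and using $|\theta(x^k)|=-\theta(x^k)$, a short manipulation shows the test holds whenever $\tfrac{L}{2}\|p(x^k)-x^k\|^2\lambda\le(1-\zeta)|\theta(x^k)|$. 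Bounding $\|p(x^k)-x^k\|^2$ by $\Omega^2$, I conclude that the Armijo condition is satisfied for every $\lambda\in(0,\bar\lambda]$, where $\bar\lambda:=\min\{1,\,2(1-\zeta)|\theta(x^k)|/(L\Omega^2)\}$.

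Finally I would carry out the casework on the step returned by the line search. If $\lambda_k=1$, then the first bound gives $\gamma|\theta(x^k)|\le \gamma(\rho+L_G)\Omega\le1=\lambda_k$, using $\gamma\le 1/((\rho+L_G)\Omega)$ from \eqref{eq:lm}. If $\lambda_k<1$, the unit step was rejected, so at least one backtracking occurred; since every $\lambda\le\bar\lambda$ passes the test, the last rejected trial value exceeds $\bar\lambda$, and moreover $\bar\lambda<1$ (otherwise $\lambda=1$ would have been accepted), so $\bar\lambda=2(1-\zeta)|\theta(x^k)|/(L\Omega^2)$. Step~LS2 guarantees that $\lambda_k$ is at least $\omega_1$ times the last rejected value, whence $\lambda_k\ge\omega_1\bar\lambda=\tfrac{2\omega_1(1-\zeta)}{L\Omega^2}|\theta(x^k)|\ge\gamma|\theta(x^k)|$ by \eqref{eq:lm}. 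In either case $\lambda_k\ge\gamma|\theta(x^k)|>0$.

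The main obstacle I anticipate is the bookkeeping that links the mechanics of the backtracking loop to the analytic threshold $\bar\lambda$: one must argue that a rejected trial step necessarily exceeds $\bar\lambda$, and that in the non-unit case the minimum defining $\bar\lambda$ is attained by the $\theta$-dependent term rather than by $1$, so that the factor $\omega_1$ produced by Step~LS2 combines with the correct branch of $\bar\lambda$. The sign bookkeeping around $\theta(x^k)<0$ (so that $-\zeta\lambda\theta(x^k)e$ and $(1-\zeta)|\theta(x^k)|$ line up correctly) must also be handled with care, though it is routine.
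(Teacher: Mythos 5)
Your proposal is correct and follows essentially the same route as the paper's proof: the unit-step case via the bound $|\theta(x^k)|\le(\rho+L_G)\Omega$ from (A4), \eqref{eq:omegarho} and \eqref{eq:diam}, and the backtracking case by playing the failed Armijo test at the last rejected trial step against the descent estimate of Lemma~\ref{lemma.desc.} to extract the factor $2\omega_1(1-\zeta)/(L\Omega^2)$. Your phrasing in terms of an explicit acceptance threshold $\bar\lambda$ is just a repackaging of the paper's inequality $-(1-\zeta)\theta(x^k)<\frac{L}{2}\|p(x^k)-x^k\|^2\bar\lambda_k$, so the two arguments coincide.
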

\begin{proof}
Since $\lambda_k \in (0,1]$ for all $k\in \N$, let us consider two possibilities: $\lambda_k =1$ and $0<\lambda_k<1$.  First we assume that $\lambda_k =1$. 
It follows from \eqref{eq:op.theta} and Lemma~\ref{optmalit.theta}  that 
  $$
 \theta(x^k)=\max_{j\in {\cal J}}\left\{ g_{j}(p(x^k))-g_{j}(x^k)+ \langle \nabla  h_j(x^k),  p(x^k)-x^k \rangle \right\}<0,
  $$
  which implies that $  0< -\theta(x^k) \leq g_{j}(x^k)-g_{j}(p(x^k))+ \langle \nabla  h_j(x^k),  x^k-p(x^k) \rangle$ for all $j\in {\cal J}$.
  Thus, the Cauchy inequality  together  with (A4) and \eqref{eq:lm}  imply that 
$$
  0< -\theta(x^k) \leq  \left( L_G+ \|\nabla h_j(x^k) \|\right)\|p(x^k)-x^k\|. 
$$
 Using \eqref{eq:omegarho}, we have   $0< -\theta(x^k)  \leq  ( \rho +L_G) \Omega. $ Hence,  the definition of $\gamma$ in \eqref{eq:lm}  implies  that  
$$
0<-\gamma \theta(x^k)\leq \frac{-\theta(x^k)}{(\rho + L_G) \Omega}\leq 1, 
$$
which shows that  the desired  equality  holds for  $\lambda_k =1.$ Now, we assume  $0<\lambda_k <1$. Thus, from the Armijo step size strategy, we conclude that there  exist $ 0<\bar{\lambda}_k \leq \min\left\{1, \lambda_k / \omega_1\right\}$ and $j_k \in \cal{J}$, such that
$$
f_{j_k} (x^k +\bar{\lambda}_k[p(x^k)-x^k] ) > f_{j_k} (x^k )+ \zeta \bar{\lambda}_k \theta(x^k).
$$
On the other hand, by using Lemma~\ref{lemma.desc.},  we have
$$
f_{j} (x^k+\bar{\lambda}_k[p(x^k) - x^k]) \leq f_{j} (x^k)+\bar{\lambda}_k \theta(x^k)+\frac{L}{2}\|p^k -x^k\|^2 \bar{\lambda}_k
^2, \quad \forall j \in \cal{J}.
$$
Thus, combining the two previous inequalities with   $ 0<\bar{\lambda}_k \leq \min\left\{1, \lambda_k / \omega_1\right\}$,  we conclude that
$$
-\theta(x^k) (1-\zeta) < \frac{L}{2} \| p(x^k) - x^k\|^2 \bar{\lambda}_k \leq \frac{L}{2} \| p(x^k) - x^k\|^2 \frac{{\lambda}_k}{\omega_1}.
$$
Therefore, using the  definition of $\Omega$ in \eqref{eq:diam}  together with the definition of $\gamma$ in \eqref{eq:lm}, we obtain   
$$
0< -\gamma \theta(x^k)=-\frac{2\omega_1 (1-\zeta)}{L\Omega^2}\theta(x^k)  < \lambda_k, 
$$
which implies that desired inequality   also holds for  $0<\lambda_k <1$.
\end{proof}
In the following theorem,  we  obtain our first iteration-complexity bound. For that, we define
\begin{equation} \label{eq:finf}
f_{j^*}(x^0):= \max \{ f_{j}(x^0) \mid j \in {\cal{J}}\} \quad \mbox{and} \quad f_{j^*}^{\inf} := \min \{ f_{j}^* \mid j \in {\cal{J}}\},
\end{equation} 
where $f_j^* := \inf \{f_j(x) \mid x\in\mathrm{dom}(G) \}$ for all $j\in\J$.
\begin{theorem}\label{teorema para taxa2}
 Assume that $F$ satisfies (A4)--(A5).  Then $\displaystyle \lim_{k \to \infty } F(x^k)= F(x^*)$, for some $x^*\in \mathrm{dom}(G)$. Moreover, there hold:
\begin{itemize}
\item [i)] $\lim_{k\to \infty} \theta(x^k) = 0$;
\item [ii)]$\min \{ |\theta(x^k)| \mid  k=0,1,\ldots, N-1\} \leq \sqrt{[f_{j^*}(x^0) -f_{j^*}^{\inf} ]/ (\zeta \gamma N)}$.
\end{itemize}
\end{theorem}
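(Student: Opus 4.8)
The plan is to distill everything from a single per-iteration decrease estimate and then read off all three claims from it. First I would rewrite the Armijo inequality \eqref{eq:faieq} componentwise as $\zeta\lambda_k|\theta(x^k)|\le f_j(x^k)-f_j(x^{k+1})$ for every $j\in\J$ and $k\in\N$, which is legitimate because $\theta(x^k)<0$ gives $-\theta(x^k)=|\theta(x^k)|$. The next step, and the one that makes the rate quantitative, is to invoke Lemma~\ref{le:bla} — whose hypotheses are precisely (A4)--(A5) — to substitute the lower bound $\lambda_k\ge\gamma|\theta(x^k)|$. This produces the master inequality
\[
\zeta\gamma\,|\theta(x^k)|^2\;\le\;f_j(x^k)-f_j(x^{k+1}),\qquad\forall\,j\in\J,\ \forall\,k\in\N,
\]
from which (i) and (ii) follow by summation.

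For the convergence of $(F(x^k))_k$, I would first use \eqref{eq:faieq} to note that each scalar sequence $(f_j(x^k))_k$ is non-increasing. The point to exploit is that (A4) makes each $g_j$ Lipschitz, hence continuous, on $\mathrm{dom}(g_j)\supseteq\mathrm{dom}(G)$; together with continuity of $h_j$ and compactness of $\mathrm{dom}(G)$ this shows each $f_j$ is continuous, and therefore bounded below, on $\mathrm{dom}(G)$. A non-increasing sequence bounded below converges, so $F(x^k)\to F^*$ for some $F^*\in\R^m$. Passing to a subsequence $x^k\to x^*\in\mathrm{dom}(G)$ (compactness) and using continuity of $F$ on $\mathrm{dom}(G)$ gives $F(x^*)=\lim_{k\in\K}F(x^k)=F^*$, hence $\lim_k F(x^k)=F(x^*)$.

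Item (i) comes from telescoping the master inequality: $\zeta\gamma\sum_{k=0}^{\infty}|\theta(x^k)|^2\le f_j(x^0)-F^*_j<\infty$, so the series converges and $\theta(x^k)\to 0$. For item (ii) I would fix any $j$, set $\hat\theta:=\min\{|\theta(x^k)|:0\le k\le N-1\}$, and sum the master inequality over $k=0,\dots,N-1$ to obtain $N\zeta\gamma\,\hat\theta^2\le f_j(x^0)-f_j(x^N)$. Bounding $f_j(x^0)\le f_{j^*}(x^0)$ and $f_j(x^N)\ge f_j^*\ge f_{j^*}^{\inf}$ replaces the right-hand side by $f_{j^*}(x^0)-f_{j^*}^{\inf}$, and solving for $\hat\theta$ yields the stated estimate.

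The only genuinely delicate step is the equality $\lim_k F(x^k)=F(x^*)$. Since $G$ is in general merely lower semicontinuous, a limit point would only deliver $f_j(x^*)\le\lim_k f_j(x^k)$, and the value of $F$ at the accumulation point could lie strictly below the limit of the iterates' values. It is exactly assumption (A4) that removes this difficulty, by upgrading $G$ to a continuous function on the compact set $\mathrm{dom}(G)$; the remaining manipulations are telescoping sums and elementary bounds.
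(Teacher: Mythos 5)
Your proposal is correct and follows essentially the same route as the paper: the Armijo inequality combined with Lemma~\ref{le:bla} yields the per-iteration decrease $\zeta\gamma|\theta(x^k)|^2 \le f_j(x^k)-f_j(x^{k+1})$, convergence of $(F(x^k))_{k\in\N}$ is obtained from monotonicity together with compactness of $\mathrm{dom}(G)$ and the continuity of $F$ there supplied by (A4), and items (i) and (ii) follow by telescoping. Your explicit remark that (A4) is what upgrades $G$ from merely l.s.c.\ to continuous on $\mathrm{dom}(G)$ is exactly the role the Lipschitz bound $\|F(x^{k_j})-F(x^*)\|\le L_G\|x^{k_j}-x^*\|+\|H(x^{k_j})-H(x^*)\|$ plays in the paper's argument.
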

\begin{proof}
By the Armijo step size strategy and  considering that   $\theta(x^k) < 0$ for all $k\in \N$, we have 
$F(x^k +\lambda_k[p(x^k) - x^k]) \preceq F(x^k) +\zeta\lambda_k \theta(x^k)e$
or, equivalently, $\zeta\lambda_k |\theta(x^k)|e  \preceq  F(x^k) - F(x^{k+1})$. Hence,   due to $\theta(x^k) < 0$, using  Lemma~\ref{le:bla}, we obtain 
\begin{equation}\label{des. theorm11}
0 \prec  \zeta\gamma |\theta(x^k)|^2e  \preceq  F(x^k) - F(x^{k+1}), 
\end{equation}
which implies that the sequence $( F(x^k))_{k \in \mathbb{N}}$ is monotone decreasing. On the other hand, since $(x^k)_{k \in \mathbb{N}}  \subset \mathrm{dom}(G)$ and $\mathrm{dom}(G)$ is compact, there exists  $x^* \in \mathrm{dom}(G)$ a limit point  of $(x^k)_{k \in \mathbb{N}}$. Let $(x^{k_j})_{j \in \mathbb{N}}$  be a subsequence of $(x^k)_{k \in \mathbb{N}} $ such that $\lim_{j \to \infty} x^{k_j}= x^*$.  Since   $(x^{k_j})_{j \in \mathbb{N}} \subset \mathrm{dom}(G)$  and $G$ satisfies (A4), we have 
$$
\|F(x^{k_j})  - F(x^*)\|=\|  G(x^{k_j})  - G(x^*) + H(x^{k_j})  - H(x^*)\|\leq  L_G \|x^{k_j} - x^*\|+  \|H(x^{k_j}) - H(x^*)\|,
$$
for all ${j \in \mathbb{N}}$. Considering that  $H$ is continuous and  $\lim_{j \to \infty} x^{k_j}= x^*$, we conclude from the last inequality that 
$ \lim_{j \to \infty } F(x^{k_j})= F(x^*).$ Thus, due to the monotonicity of the  sequence $( F(x^k))_{k \in \mathbb{N}}$,  we obtain  that   $\lim_{k \to \infty } F(x^{k})= F(x^*)$. Hence, taking limits on \eqref{des. theorm11}, we obtain  $\lim_{k\to \infty} |\theta(x^k)|^2 = 0$, which implies item~{\it (i)}.
By summing both sides of the second inequality in \eqref{des. theorm11} for $k=0,1,\ldots,N-1$ and using   \eqref{eq:finf}, we obtain
$$
\sum_{k=0}^{N-1} |\theta(x^k)|^2 \leq \frac{1}{\zeta \gamma} [ f_{j^*}(x^0) -  f_{j^*}^{\inf}].
$$
Thus, $\min \{ |\theta(x^k)|^2: k=0,1,\ldots, N-1\} \leq [f_{j^*}(x^0) -f_{j^*}^{\inf} ]/ (\zeta \gamma N)$, which implies the  item~{\it (ii)}.
 \end{proof}
 
 \begin{corollary} \label{cr:cosk}
 Assume that $F$ satisfies (A4)--(A5)  and $\varepsilon >0$. Define the set $K(\varepsilon) := \{ k\in \N : | \theta(x^k)| > \varepsilon \}.$ Then,
 $$
 |K(\varepsilon)| \leq \frac{f_{j_*} (x^0)-f_{j_*}^{\inf} }{\zeta \gamma} \frac{1}{\varepsilon^2},
 $$
 where $|K(\varepsilon)|$ denotes the number of elements of $K(\varepsilon)$.
 \end{corollary}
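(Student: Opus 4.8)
The plan is to convert the convergent-series estimate already contained in the proof of Theorem~\ref{teorema para taxa2} into a counting bound on the ``bad'' iterations. The engine is inequality \eqref{des. theorm11}, which under (A4)--(A5) reads $0 \prec \zeta\gamma|\theta(x^k)|^2 e \preceq F(x^k) - F(x^{k+1})$ for every $k\in\N$. Reading off the $j_*$-th component and telescoping over $k=0,1,\ldots,N-1$, then bounding the final functional value below by $f_{j_*}^{\inf}$ exactly as in Theorem~\ref{teorema para taxa2}, yields the partial-sum estimate
\begin{equation*}
\sum_{k=0}^{N-1} |\theta(x^k)|^2 \leq \frac{1}{\zeta\gamma}\big[f_{j_*}(x^0) - f_{j_*}^{\inf}\big], \qquad \forall N\in\N^*.
\end{equation*}

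First I would note that this bound is \emph{uniform} in $N$, so the full nonnegative series converges and inherits the same bound, namely $\sum_{k=0}^{\infty} |\theta(x^k)|^2 \leq \big[f_{j_*}(x^0) - f_{j_*}^{\inf}\big]/(\zeta\gamma)$. This is the only place where a small amount of care is needed: one must observe that the partial sums are monotone and uniformly bounded, hence the limiting series exists and respects the bound.

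Next I would restrict the sum to the index set $K(\varepsilon)$. By the very definition of $K(\varepsilon)$, each $k\in K(\varepsilon)$ satisfies $|\theta(x^k)| > \varepsilon$, hence $|\theta(x^k)|^2 > \varepsilon^2$. Therefore
\begin{equation*}
|K(\varepsilon)|\,\varepsilon^2 \leq \sum_{k\in K(\varepsilon)} |\theta(x^k)|^2 \leq \sum_{k=0}^{\infty} |\theta(x^k)|^2 \leq \frac{1}{\zeta\gamma}\big[f_{j_*}(x^0) - f_{j_*}^{\inf}\big].
\end{equation*}
In particular, the left-hand side being finite forces $K(\varepsilon)$ to be a finite set (otherwise $|K(\varepsilon)|\varepsilon^2 = +\infty$, contradicting the finite right-hand bound), so $|K(\varepsilon)|$ is well defined. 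Dividing through by $\varepsilon^2$ gives the asserted estimate.

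I do not expect a genuine obstacle here: the result is a direct counting consequence of a convergent series whose bound was already furnished in Theorem~\ref{teorema para taxa2}. The two minor points to handle cleanly are the passage from the uniform partial-sum bound to the bound on the full series, and the harmless notational identification of $f_{j_*}$ and $f_{j_*}^{\inf}$ in the statement with the quantities $f_{j^*}(x^0)$ and $f_{j^*}^{\inf}$ defined in \eqref{eq:finf}.
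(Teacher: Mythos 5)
Your proposal is correct and rests on essentially the same engine as the paper: the telescoped inequality $\sum_{k=0}^{N-1}|\theta(x^k)|^2\leq [f_{j^*}(x^0)-f_{j^*}^{\inf}]/(\zeta\gamma)$ obtained from \eqref{des. theorm11} in the proof of Theorem~\ref{teorema para taxa2}, followed by a Chebyshev-type counting step. The paper compresses this into a one-line appeal to item (ii) of that theorem, while you spell out the (harmless) passage to the full series and the finiteness of $K(\varepsilon)$; no genuine difference in approach.
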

 \begin{proof}
 The proof follows straightforwardly from item $(ii)$ of Theorem~\ref{teorema para taxa2}.
 \end{proof} 
  
\begin{corollary}   \label{cr:coska}
Assume that $F$ satisfies (A4)--(A5)  and $\varepsilon>0$. Consider an iteration $k$ and let $F(x^k)$ be given. If  $|\theta(x^k)|>\varepsilon$, then the Armijo line search algorithm performs, at most, $1+ \ln(\gamma \varepsilon )/\ln(\omega_2)$ evaluations of $F$ to compute the step size $\lambda_k$.
\end{corollary}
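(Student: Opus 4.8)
The plan is to count the number of trial step sizes generated by the Armijo line search and to bound this count by combining the geometric contraction factor $\omega_2$ with the lower bound on the accepted step furnished by Lemma~\ref{le:bla}. First I would observe that each pass through Step~LS1 evaluates $F$ exactly once; hence, if the line search accepts at index $\ell^*$ (that is, $\lambda_k=\lambda_{k_{\ell^*}}$), then precisely $\ell^*+1$ evaluations of $F$ are performed, and it suffices to bound $\ell^*$.

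Next I would exploit the contraction imposed in Step~LS2. Since $\lambda_{k_0}=1$ and $\lambda_{k_{\ell+1}}\in[\omega_1\lambda_{k_\ell},\,\omega_2\lambda_{k_\ell}]$, an immediate induction gives $\lambda_{k_{\ell^*}}\le \omega_2^{\ell^*}$. On the other hand, because $F$ satisfies (A4)--(A5), Lemma~\ref{le:bla} applies to the accepted step and yields $\lambda_k=\lambda_{k_{\ell^*}}\ge \gamma\,|\theta(x^k)|$; combining this with the hypothesis $|\theta(x^k)|>\varepsilon$ gives $\lambda_{k_{\ell^*}}>\gamma\varepsilon$. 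Chaining the two estimates produces $\gamma\varepsilon<\omega_2^{\ell^*}$.

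Finally I would take natural logarithms of $\gamma\varepsilon<\omega_2^{\ell^*}$ to obtain $\ln(\gamma\varepsilon)<\ell^*\ln(\omega_2)$, and then divide by $\ln(\omega_2)$. The single point requiring care — and the only real obstacle, though a minor one — is the sign: since $\omega_2\in(0,1)$ we have $\ln(\omega_2)<0$, so dividing reverses the inequality and yields $\ell^*<\ln(\gamma\varepsilon)/\ln(\omega_2)$. (The chain $\gamma\varepsilon<\lambda_{k_{\ell^*}}\le 1$ also forces $\gamma\varepsilon<1$, so $\ln(\gamma\varepsilon)<0$ and the right-hand bound is genuinely positive, as it must be.) Adding one to bound the number of evaluations $\ell^*+1$ then delivers the claimed estimate $1+\ln(\gamma\varepsilon)/\ln(\omega_2)$. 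The substance of the argument is entirely carried by Lemma~\ref{le:bla}; the corollary itself is a short logarithmic manipulation built on the geometric decay of the trial steps.
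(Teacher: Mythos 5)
Your proposal is correct and follows essentially the same route as the paper's proof: count evaluations as one more than the number of backtracking steps, bound the accepted step above by $\omega_2^{\ell}$ via Step~LS2 and below by $\gamma\varepsilon$ via Lemma~\ref{le:bla}, and take logarithms (minding that $\ln(\omega_2)<0$). Your additional remarks on the induction for the geometric decay and on the sign of $\ln(\gamma\varepsilon)$ are sound but only make explicit what the paper leaves implicit.
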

\begin{proof}
Let  $\ell_k$ and $e(k)$ be, respectively, the number of inner iterations and  the  number of evaluations of $F$ in  the Armijo line search  algorithm  to compute $\lambda_k$. Then, by the definition of the algorithm, we have $e(k)= \ell_k + 1$ and   $\omega_2^{\ell_k}\geq \lambda_k$.  Hence, using  Lemma~\ref{le:bla}, it follows that $\omega_2^{\ell_k}\geq \gamma |\theta(x^k)|$. Since $|\theta(x^k)|>\varepsilon$, we have  $\omega_2^{\ell_k}\geq \gamma \varepsilon $. Therefore,  due to $0<w_2<1$, we obtain $\ell_k\leq \ln(\gamma \varepsilon )/\ln(\omega_2)$, concluding the proof.
\end{proof} 
\begin{theorem} \label{th:cbas}
Assume that $F$ satisfies (A4)--(A5)  and $\varepsilon>0$. Then, Algorithm~\ref{Alg:CondG} generates 
a point $x^k$ such that $|\theta(x^k)|\leq \varepsilon$, performing, at most,
$$
m\left[\left(1+ \frac{\ln(\gamma \varepsilon )}{\ln(\omega_2)}\right) \frac{ f_{j_*}(x^0)-f_{ j_*}^{\mathrm{inf}}}{\zeta \gamma } \frac{1}{\varepsilon^2}+1\right]={\cal O}(|\ln(\varepsilon)|\varepsilon^{-2})$$
evaluations  of functions  $f_1, \ldots f_m$, and 
$$
m\left[\frac{ f_{j_*}(x^0)-f_{ j_*}^{\mathrm{inf}}}{\zeta \gamma } \frac{1}{\varepsilon^2}+1\right]={\cal O}(\varepsilon^{-2})
$$
evaluations  of gradients  $\nabla h_1, \ldots, \nabla h_m$.
\end{theorem}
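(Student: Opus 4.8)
The plan is to assemble the two counting corollaries into an iteration-by-iteration ledger, so the proof is essentially bookkeeping built on top of Corollaries~\ref{cr:cosk} and~\ref{cr:coska}. First I would let $k_0$ be the \emph{first} index for which $|\theta(x^{k_0})|\leq \varepsilon$. Such an index exists: by Theorem~\ref{teorema para taxa2}~(i) we have $\theta(x^k)\to 0$, and equivalently the set $K(\varepsilon)=\{k\in\N \mid |\theta(x^k)|>\varepsilon\}$ is finite by Corollary~\ref{cr:cosk}, so not every index can lie in $K(\varepsilon)$. The key observation is that, by minimality of $k_0$, every index $k\in\{0,1,\ldots,k_0-1\}$ satisfies $|\theta(x^k)|>\varepsilon$ and therefore belongs to $K(\varepsilon)$; hence $k_0\leq |K(\varepsilon)|$. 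This is what turns the cardinality bound of Corollary~\ref{cr:cosk} into a bound on the number of iterations actually executed before the target accuracy is attained.

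Next I would count the two oracles separately. For the gradients, note that Step~1 at each iteration $k=0,1,\ldots,k_0$ forms $p(x^k)$ and $\theta(x^k)$ and thus requires $\nabla h_j(x^k)$ for every $j\in{\cal J}$, i.e.\ exactly $m$ gradient evaluations per iteration (including the terminal iteration $k_0$, where $\theta(x^{k_0})$ must be computed to certify $|\theta(x^{k_0})|\leq\varepsilon$). Summing over the $k_0+1$ iterations and using $k_0\leq |K(\varepsilon)|$ together with Corollary~\ref{cr:cosk} gives at most $m(|K(\varepsilon)|+1)\leq m\big[\tfrac{f_{j_*}(x^0)-f_{j_*}^{\inf}}{\zeta\gamma}\varepsilon^{-2}+1\big]$ evaluations of $\nabla h_1,\ldots,\nabla h_m$, which is the claimed $\mathcal{O}(\varepsilon^{-2})$ estimate, and here the additive $+1$ comes precisely from the terminal $\theta$-computation.

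For the function evaluations, the outset requires one evaluation of $F(x^0)$, costing $m$ evaluations of $f_1,\ldots,f_m$; thereafter $F(x^{k})$ is carried over as ``given'' from the accepted trial point of the previous line search, matching the hypothesis of Corollary~\ref{cr:coska}. At each iteration $k\in\{0,\ldots,k_0-1\}$---all of which lie in $K(\varepsilon)$, so $|\theta(x^k)|>\varepsilon$---the Armijo line search performs at most $1+\ln(\gamma\varepsilon)/\ln(\omega_2)$ evaluations of $F$ by Corollary~\ref{cr:coska}, each costing $m$ componentwise evaluations. Summing over the $k_0\leq |K(\varepsilon)|$ line searches and again invoking Corollary~\ref{cr:cosk} yields the total bound $m\big[1+(1+\ln(\gamma\varepsilon)/\ln(\omega_2))|K(\varepsilon)|\big]\leq m\big[(1+\ln(\gamma\varepsilon)/\ln(\omega_2))\tfrac{f_{j_*}(x^0)-f_{j_*}^{\inf}}{\zeta\gamma}\varepsilon^{-2}+1\big]$, exactly the stated expression.

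Finally I would read off the asymptotics: since $0<\omega_2<1$ we have $\ln(\omega_2)<0$, so as $\varepsilon\to 0^+$ the factor $1+\ln(\gamma\varepsilon)/\ln(\omega_2)$ grows like $|\ln\varepsilon|/|\ln\omega_2|$, whence the function-evaluation count is $\mathcal{O}(|\ln\varepsilon|\varepsilon^{-2})$ while the gradient count stays $\mathcal{O}(\varepsilon^{-2})$. The argument involves no hard analysis; the only point demanding care is the inequality $k_0\leq|K(\varepsilon)|$---recognizing that all pre-termination iterations necessarily belong to $K(\varepsilon)$---and keeping the per-iteration accounting honest, namely that each $F$-evaluation and each $\theta$-computation unfolds into $m$ componentwise oracle calls, which is what produces the leading factor $m$ and the two distinct additive ``$+1$'' terms.
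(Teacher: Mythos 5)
Your proposal is correct and follows exactly the route the paper intends: its entire proof is the one-line remark that the result ``follows from the combination of Corollaries~\ref{cr:cosk} and \ref{cr:coska}'', and your ledger (bounding the first successful index $k_0$ by $|K(\varepsilon)|$, charging $m$ gradient calls per iteration and $m$ function calls per line-search trial, and tracking the two additive $+1$ terms) is precisely the bookkeeping that remark leaves implicit. No gaps.
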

\begin{proof}
The proof follows from the combination of  Corollaries~\ref{cr:cosk}  and \ref{cr:coska}. 
\end{proof} 
Similar results of Corollaries \ref{cr:cosk} and \ref{cr:coska} and Theorem~\ref{th:cbas}, with respect to the scalar  gradient method   were obtained in \cite{GrapigliaSachs2017}.  

 \begin{remark}
 Assume that   $F$ satisfies (A4)--(A6).  Moreover  assume that $\lim_{k \to \infty}F(x^k)=F(x^*)$,  for some $x^*\in \mathrm{dom}(G)$,  and take $\Omega>0$ satisfying \eqref{eq:diam}. Then, for all $k\in {\mathbb{N}^*}$,  the following inequality holds 
   \begin{equation}\label{des.teo.mos.1}
   \min_{j \in \cal{J}}\left( f_{j}(x^k) - f_{j}(x^*)\right) \leq \frac{1}{\zeta \gamma} \frac{1}{ k},
      \end{equation}
      where $\gamma$ is given in \eqref{eq:lm}.
 Indeed, since $\lambda_k$ satisfies the Armijo step size rule, we have  
$
F(x^{k+1}) - F(x^*)\preceq F(x^k) - F(x^*)+\zeta\lambda_k \theta(x^k)e. 
$
 Thus,  the last inequality, together with Lemma~\ref{le:bla}, implies
\begin{equation}\label{eq. teorm.}
 \min_{j \in \cal{J}}\left( f_j(x^{k+1}) -f_j(x^*) \right) \leq  \min_{j \in \cal{J}}\left( f_j(x^{k}) -f_j(x^*) \right) -\zeta \gamma\theta(x^k)^2.
\end{equation}
On the other hand, using  the convexity of $h_j$, for all $ j \in \cal{J}$,  we conclude   that 
$$
f_j(x^*)  - f_j(x^k)=  g_j(x^*)-g_j (x^k) + h_j(x^*)-h_j (x^k)  \geq  g_j(x^*)-g_j (x^k)+\big\langle  \nabla h_j (x^k ) , x^* -x^k\big\rangle, 
$$
Since  $\left( F(x^k)\right)_{k \in \mathbb{N}}$ is  decreasing monotone  and  $\lim_{k \to \infty}F(x^k)= F(x^*)$,
we have $ F(x^*) \preceq F(x^k)$, for all $k\in \N$. Thus,  the last inequality implies that  
$$
0\geq f_j(x^*)  - f_j(x^k) \geq  g_j(x^*)-g_j (x^k)+ \langle  \nabla h_j (x^k ) , x^* -x^k\rangle, \quad \forall j \in \cal{J}.
$$
Taking maximum in the least inequality and using  the definition of $\theta(x^k)$ in \eqref{eq:op.theta}, we conclude that 
$$
0 \geq \max_{j \in \cal{J}}\left(f_{j}(x^*)-f_{j}(x^k) \right) \geq \max_{j \in \cal{J}} \left(  g_j(x^*)-g_j (x^k)+\langle  \nabla h_j (x^k ) , x^* -x^k\rangle \right) \geq  \theta(x^k), 
$$
which implies that $0 \geq -\min_{j \in \cal{J}}\{ f_{j}(x^k)-f_{j}(x^*)\}  \geq  \theta(x^k)$.  Therefore,   we obtain  
$$
0 \leq \Big(\min_{j \in \cal{J}}\big( f_{j}(x^k)-f_{j}(x^*)\big)\Big)^2  \leq  \theta(x^k)^2.
$$
The combination of the last inequality with \eqref{eq. teorm.} yields  
$$
 \zeta \gamma \Big(\min_{j \in \cal{J}}\big( f_{j}(x^k)-f_{j}(x^*)\big)\Big)^2\leq  \min_{j \in \cal {J}}\big(f_{j}(x^k)-f_{j}(x^*)\big) -  \min_{j \in \cal {J}}\big(f_{j}(x^{k+1})-f_{j}(x^*)\big), 
$$
for all $k \in \mathbb{N}$. Finally, applying  Lemma~\ref{taxa2}, with $a_{k}=\min_{j \in\cal{J}}\left(f_{j}(x^k)-f_{j}(x^*)\right)$ and $\Gamma=\zeta \gamma$, we obtain the desired inequality  \eqref{des.teo.mos.1}.  
 \end{remark}

\subsection{Convergence analysis using adaptive and diminishing  step sizes} 
The purpose of  this section is to  analyze the sequence $(x^k)_{k \in \mathbb{N}}$ generated by  Algorithm~\ref{Alg:CondG} with adaptive and  diminishing   step  sizes.   We begin by showing that, in particular,  if $(x^k)_{k \in \mathbb{N}}$ is generated by    Algorithm~\ref{Alg:CondG} with the adaptive step size \eqref{eq:fixed.step}, then $(F(x^k))_{k \in \mathbb{N}}$ is a nonincreasing  sequence. 
\begin{lemma}\label{lemma descente}
Assume that   $F$ satisfies (A5). Let  $(x^k)_{k \in \mathbb{N}}$ be  generated by Algorithm~\ref{Alg:CondG} with the adaptive step size \eqref{eq:fixed.step}. Then
	\begin{equation}\label{des.mon.3}
	F(x^{k+1})- F(x^k) \preceq - \frac{1}{2}\min_{}\left\{ |\theta(x^k)|, \frac{\theta(x^k)^2}{L \Omega^2}\right\} e, \quad \forall k \in \mathbb{N},
	\end{equation}	
	where $\Omega>0$ is given in \eqref{eq:diam}. As a consequence, $(F(x^k))_{k \in \mathbb{N}}$ is a nonincreasing   sequence.	
	\end{lemma}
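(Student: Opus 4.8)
The plan is to combine the composite descent estimate of Lemma~\ref{lemma.desc.} with the closed-form characterization of the adaptive step size in \eqref{eq:fixed.step}, and then to minimize the resulting scalar quadratic over $(0,1]$ by a two-case analysis.

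First I would invoke Lemma~\ref{lemma.desc.} at the point $x^k$ with $\lambda = \lambda_k \in (0,1]$. Since $x^{k+1} = x^k + \lambda_k(p(x^k)-x^k)$, this immediately gives
\begin{equation*}
F(x^{k+1}) - F(x^k) \preceq \Big( \lambda_k\, \theta(x^k) + \frac{L}{2}\|p(x^k)-x^k\|^2\,\lambda_k^2 \Big) e.
\end{equation*}
Because $\theta(x^k)<0$ for all $k$, I set $b_k := |\theta(x^k)| = -\theta(x^k)>0$ and consider the scalar quadratic $\phi_k(\lambda) := -b_k\lambda + \tfrac{L}{2}\|p(x^k)-x^k\|^2\lambda^2$, so the right-hand side above is exactly $\phi_k(\lambda_k)\,e$. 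By \eqref{eq:fixed.step}, $\lambda_k$ is precisely the minimizer of $\phi_k$ on $(0,1]$, so the whole task reduces to bounding the optimal value $\phi_k(\lambda_k)$ from above by $-\tfrac{1}{2}\min\{b_k,\, b_k^2/(L\Omega^2)\}$.

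Next I would distinguish the two regimes encoded in the $\min$ of \eqref{eq:fixed.step}. If $b_k \le L\|p(x^k)-x^k\|^2$, the unconstrained minimizer lies in $(0,1]$ and $\lambda_k = b_k/(L\|p(x^k)-x^k\|^2)$; substituting back yields the optimal value $-b_k^2/(2L\|p(x^k)-x^k\|^2)$. Since $x^k, p(x^k) \in \mathrm{dom}(G)$ and $\diam(\mathrm{dom}(G)) \le \Omega$ by \eqref{eq:diam}, we have $\|p(x^k)-x^k\| \le \Omega$, and enlarging the denominator gives $\phi_k(\lambda_k) \le -b_k^2/(2L\Omega^2)$. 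If instead $b_k > L\|p(x^k)-x^k\|^2$, the step is clipped to $\lambda_k = 1$, and using $\tfrac{L}{2}\|p(x^k)-x^k\|^2 < \tfrac12 b_k$ one obtains $\phi_k(1) < -\tfrac12 b_k$. In both regimes the value is at most $-\tfrac12\min\{b_k,\, b_k^2/(L\Omega^2)\}$, since the quantity produced in each regime majorizes the corresponding entry of the minimum. Restoring $b_k = |\theta(x^k)|$ and $b_k^2 = \theta(x^k)^2$ then recovers \eqref{des.mon.3}.

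The monotonicity consequence is then immediate: as $\theta(x^k)<0$, both $|\theta(x^k)|$ and $\theta(x^k)^2/(L\Omega^2)$ are strictly positive, so the right-hand side of \eqref{des.mon.3} is a strictly negative multiple of $e$, forcing $F(x^{k+1}) \prec F(x^k)$ and hence that $(F(x^k))_{k\in\mathbb{N}}$ is nonincreasing. I do not anticipate any genuine difficulty; the only steps demanding care are the arithmetic of evaluating $\phi_k$ at its constrained minimizer, the use of the diameter bound $\|p(x^k)-x^k\|\le\Omega$ to replace $\|p(x^k)-x^k\|^2$ by $\Omega^2$ in the denominator, and checking that the two separate case-bounds are each dominated by the single $\min$ expression on the right of \eqref{des.mon.3}.
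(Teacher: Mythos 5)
Your proposal is correct and follows essentially the same route as the paper's proof: apply Lemma~\ref{lemma.desc.} with $\lambda=\lambda_k$, split into the two regimes of the adaptive step size \eqref{eq:fixed.step} (clipped at $1$ versus interior minimizer), obtain $-\tfrac12|\theta(x^k)|$ and $-\theta(x^k)^2/(2L\|p(x^k)-x^k\|^2)$ respectively, and finish with the diameter bound $\|p(x^k)-x^k\|\le\Omega$. The only cosmetic difference is that you phrase the case analysis as minimizing the scalar quadratic $\phi_k$ over $(0,1]$, which is exactly what the paper's two cases compute.
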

\begin{proof}
Let us analyze the two possibilities   for $\lambda_{k}$ defined in \eqref{eq:fixed.step}. First, assume that $\lambda_{k}=1$. In this case,  using \eqref{eq:fixed.step}, we have  $L\|p(x^k)-x^k\|^2 \leq |\theta(x^k)|$. Thus,  taking into account (A5), we apply  Lemma~\ref{lemma.desc.} with $\lambda=1$ and  $x=x^k$ to obtain 
 \begin{equation} \label{des. mon. ilemma2}
	F(p(x^k)) \preceq F(x^k) + \Big( \theta(x^k)+\frac{L}{2}\|p(x^k)-x^k\|^2 \Big)e\preceq F(x^k) + \Big( \theta(x^k)+\frac{1}{2} |\theta(x^k)| \Big)e.
 \end{equation}
 Due to $\lambda_k=1$, it follows from     \eqref{eq:iteration}  that $x^{k+1}=p(x^k)$. Therefore, since  $|\theta(x^k)|=-\theta(x^k)$,  we conclude  from \eqref{des. mon. ilemma2} that 
	\begin{equation}\label{des. mon. lemma2}
	F(x^{k+1}) \preceq F(x^k) - \frac{1}{2}|\theta(x^k)|e.
	\end{equation}
	Now, assume that $\lambda_{k}=-\theta(x^k)/(L\|p(x^k)-x^k\|^2)$.   Thus,  applying  Lemma~\ref{lemma.desc.} with $\lambda=\lambda_k$ and  $x=x^k$,  and considering  \eqref{eq:iteration}, we  obtain 
	\begin{equation}\label{des. mon. lemma21}
	F(x^{k+1}) \preceq F(x^k) + \left(\lambda_{k} \theta(x^k)+\frac{L}{2}\|p(x^k)-x^k\|^2 \lambda_{k}^2\right)e = F(x^k) - \frac{\theta(x^k)^2}{2L\|p(x^k)-x^k\|^2}e.
	\end{equation}
	Therefore,  the combination of \eqref{des. mon. lemma2} and \eqref{des. mon. lemma21} yields
	$$
		F(x^{k+1}) \preceq F(x^k) -\frac{1}{2}\min \left\{  |\theta(x^k)|,\frac{\theta(x^k)^2}{L\|p(x^k)-x^k\|^2} \right\} e. 
        $$
	Since $\Omega \geq \diam(\mathrm{dom}(G))$, the last inequality implies that  \eqref{des.mon.3} holds.
\end{proof}

For state the next result, we consider constants $f_{j^*}(x^0)$ and  $f_{j^*}^{\inf}$ as in \eqref{eq:finf}.

\begin{proposition}\label{proposition4}
Assume that   $F$ satisfies (A5).  Let $(x^k)_{k \in \mathbb{N}}$ be generated by Algorithm~\ref{Alg:CondG} with the adaptive step size \eqref{eq:fixed.step}. Then
	\begin{itemize}
		\item [(i)] $\lim_{k \to\infty}\theta(x^k)=0$;
		
		\item[(ii)] for every $N \in \mathbb{N}$, there holds
		$$
		\min \left\{ |\theta(x^k)| \mid k=0,1,\ldots,N-1\right\} \leq \max\left\{ \frac{2}{N}\left(f_{j^*}(x^0)-f_{j^*}^{\inf}\right), \Omega \sqrt{\frac{2L}{N}\left(f_{j^*}(x^0)-f_{j^*}^{\inf}\right)}\right\}.
		$$
	\end{itemize} 
\end{proposition}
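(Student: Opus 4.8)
The engine of the whole argument is the one-step decrease furnished by Lemma~\ref{lemma descente}: for every $k\in\mathbb{N}$ and every $j\in{\cal J}$,
\[
f_j(x^{k+1})-f_j(x^k)\le-\tfrac{1}{2}\min\Big\{|\theta(x^k)|,\,\tfrac{\theta(x^k)^2}{L\Omega^2}\Big\}=:-\delta_k\le 0 .
\]
First I would record that each $f_j$ is bounded below on $\mathrm{dom}(G)$: since $g_j$ is proper, convex, and l.s.c.\ by (A2) and $h_j$ is continuous by (A1), the sum $f_j$ is l.s.c.\ on the compact set $\mathrm{dom}(G)$ (A3), hence attains its infimum there, so $f_j^{*}>-\infty$ and the quantity $C:=f_{j^*}(x^0)-f_{j^*}^{\inf}$ from \eqref{eq:finf} is a finite nonnegative number (indeed $f_j(x^0)\ge f_j^{*}\ge f_{j^*}^{\inf}$ for each $j$, so $C\ge 0$).

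For item (i) the plan is to exploit summability. Fixing any $j$ and telescoping the displayed inequality gives $\sum_{k=0}^{N-1}\delta_k\le f_j(x^0)-f_j(x^{N})\le f_j(x^0)-f_j^{*}<\infty$ for all $N$, so the nonnegative series $\sum_k\delta_k$ converges and therefore $\delta_k\to 0$. It then remains to pass from $\delta_k\to0$ to $\theta(x^k)\to0$. Writing $t_k:=|\theta(x^k)|\ge 0$ and using $\theta(x^k)^2=t_k^2$, one has $\delta_k=\tfrac12\min\{t_k,\,t_k^2/(L\Omega^2)\}$; if some subsequence satisfied $t_k\ge c>0$, then along it $\delta_k\ge\tfrac12\min\{c,\,c^2/(L\Omega^2)\}>0$, contradicting $\delta_k\to0$. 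Hence $t_k\to 0$, i.e.\ $\lim_{k\to\infty}\theta(x^k)=0$.

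For item (ii) I would sum the one-step decrease over $k=0,\dots,N-1$ and bound the right-hand side uniformly in $j$. Using $f_j(x^0)\le f_{j^*}(x^0)$ and $f_j(x^N)\ge f_j^{*}\ge f_{j^*}^{\inf}$ yields $\sum_{k=0}^{N-1}\delta_k\le C$. Setting $\mu:=\min\{|\theta(x^k)|:k=0,\dots,N-1\}$, the inequalities $|\theta(x^k)|\ge\mu$ and $\theta(x^k)^2=|\theta(x^k)|^2\ge\mu^2$, valid for each such $k$, give $\delta_k\ge\tfrac12\min\{\mu,\,\mu^2/(L\Omega^2)\}$, whence $\tfrac{N}{2}\min\{\mu,\,\mu^2/(L\Omega^2)\}\le C$, that is, $\min\{\mu,\,\mu^2/(L\Omega^2)\}\le 2C/N$. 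A two-case analysis then finishes: if the minimum is $\mu$ we get $\mu\le 2C/N$, and if it is $\mu^2/(L\Omega^2)$ we get $\mu^2\le 2L\Omega^2 C/N$, i.e.\ $\mu\le\Omega\sqrt{2LC/N}$; in either case $\mu\le\max\{2C/N,\,\Omega\sqrt{2LC/N}\}$, which is exactly the claimed bound.

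The routine pieces (telescoping, lower-boundedness on $\mathrm{dom}(G)$) are immediate; the only genuinely delicate points are the deduction of the full limit $\theta(x^k)\to0$ from mere summability of $(\delta_k)$ — where one must observe that the quadratic branch dominates the linear branch for small $|\theta(x^k)|$, so that smallness of $\delta_k$ forces smallness of $|\theta(x^k)|$ — and the clean handling of the nested $\min$/$\max$ in the final case split. Neither is a serious obstacle, so I expect the proof to reduce entirely to careful bookkeeping around the descent inequality \eqref{des.mon.3}.
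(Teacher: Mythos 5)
Your proof is correct and follows essentially the same route as the paper: both rest on the descent inequality of Lemma~\ref{lemma descente}, sum it over $k=0,\dots,N-1$, and resolve the nested $\min$ by the same two-case analysis. The only minor (and arguably cleaner) difference is in item (i), where you deduce that the per-step decrease tends to zero from lower-boundedness of $F$ on the compact set $\mathrm{dom}(G)$ together with summability of the telescoping terms, whereas the paper first establishes convergence of $(F(x^k))_{k\in\mathbb{N}}$ by the limit-point argument of Theorem~\ref{teorema para taxa2}; both routes then need the observation, which you make explicit, that $\min\{|\theta(x^k)|,\theta(x^k)^2/(L\Omega^2)\}\to 0$ forces $|\theta(x^k)|\to 0$.
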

\begin{proof}
	Lemma~\ref{lemma descente} implies that
	\begin{equation}\label{eq:conv.theta}
	0< \min \left\{ |\theta(x^k)|,\frac{\theta(x^k)^2}{L\Omega^2} \right\}e  \leq 2\left(F(x^k)-F(x^{k+1})\right), \quad \forall k\in \N. 
	\end{equation}
	As in the proof of Lemma~\ref{teorema para taxa2}, it follows that $(F(x^k))_{k \in \mathbb{N}}$ converges. Thus, taking limits as $k$ goes to infinity on \eqref{eq:conv.theta}, we obtain  {\it (i)}.
Next we proceed to prove {\it (ii)}. By summing both sides  of the second inequality in \eqref{eq:conv.theta} for $k=0,1,\ldots,N-1$,  and taking into account the definition of $f_{j^*}(x^0)$ and  $f_{j^*}^{\inf}$, we obtain
	$$
	\sum_{k=0}^{N-1} \min\left\{ |\theta(x^k)|, \frac{\theta(x^k)^2}{L\Omega^2} \right\} \leq  2\left(f_{j^*}(x^0)-f_{j^*}^{\inf}\right). 
	$$
Therefore, we have 
$$
		\min \left\{  \min\left\{ |\theta(x^k)|, \frac{\theta(x^k)^2}{L\Omega^2} \right\}:~ k=0,1,\ldots,N-1\right\} \leq  \frac{2}{N}\left(f_{j^*}(x^0)-f_{j^*}^{\inf}\right), 
		$$
which implies  the statement of item {\it (ii)}.  
\end{proof}
\begin{theorem}
Assume that   $F$ satisfies (A5).  Let $ (x^k)_{k \in \mathbb{N}}$ be generated by Algorithm~\ref{Alg:CondG} with the adaptive step size.  Then, every limit point of $ (x^k)_{k \in \mathbb{N}}$ is a Pareto critical point of  problem~\eqref{eq:Problem1}.
\end{theorem}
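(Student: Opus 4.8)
The plan is to reduce this convergence theorem to two facts already established for the adaptive step size: that $\lim_{k\to\infty}\theta(x^k)=0$ (Proposition~\ref{proposition4}(i)) and the qualitative properties of the gap function collected in Lemma~\ref{optmalit.theta}. In contrast with the Armijo case, no case analysis on the step size is required here, precisely because Proposition~\ref{proposition4}(i) already guarantees $\theta(x^k)\to 0$ along the \emph{entire} sequence, not merely along a subsequence.

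First I would fix a limit point $\bar{x}$ of $(x^k)_{k\in\mathbb{N}}$ and choose $\K\subsetinf\N$ with $\lim_{k\in\K}x^k=\bar{x}$. Since the whole sequence lies in $\mathrm{dom}(G)$, which is convex and compact by (A3), the limit point satisfies $\bar{x}\in\mathrm{dom}(G)$, so that $\theta(\bar{x})$ is well defined and Lemma~\ref{optmalit.theta} applies at $\bar{x}$.

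Next I would pass to the limit along $\K$. Because $\theta(\cdot)$ is upper semicontinuous by Lemma~\ref{optmalit.theta}(iii) and $\lim_{k\in\K}x^k=\bar{x}$, we have $\limsup_{k\in\K}\theta(x^k)\le\theta(\bar{x})$. On the other hand, Proposition~\ref{proposition4}(i) yields $\lim_{k\in\K}\theta(x^k)=0$, so the left-hand side equals $0$ and hence $\theta(\bar{x})\ge 0$. Combining this with the inequality $\theta(\bar{x})\le 0$ from Lemma~\ref{optmalit.theta}(i) forces $\theta(\bar{x})=0$. Finally, Lemma~\ref{optmalit.theta}(ii) identifies every zero of $\theta$ with a Pareto critical point of problem~\eqref{eq:Problem1}, which concludes the argument.

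There is no genuine obstacle in this proof: the entire difficulty has already been absorbed into Proposition~\ref{proposition4}(i), whose verification rested on the descent estimate of Lemma~\ref{lemma descente}. The only point demanding a little care is to ensure $\bar{x}\in\mathrm{dom}(G)$, so that the gap function and its upper semicontinuity may legitimately be invoked at $\bar{x}$; this is immediate from the compactness assumption (A3) together with the fact that the iterates remain in $\mathrm{dom}(G)$.
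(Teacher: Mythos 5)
Your proof is correct and follows essentially the same route as the paper's: both deduce $\bar{x}\in\mathrm{dom}(G)$ from (A3), combine Proposition~\ref{proposition4}(i) with the upper semicontinuity of $\theta$ from Lemma~\ref{optmalit.theta}(iii) to get $\theta(\bar{x})\geq 0$, and then invoke items (i) and (ii) of Lemma~\ref{optmalit.theta} to conclude $\theta(\bar{x})=0$ and Pareto criticality.
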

\begin{proof}
Let $\bar{x}$ be a limit point of the sequence $(x^k)_{k \in \mathbb{N}}$ and $ \mathbb{K} \subsetinf \mathbb{N}$ such that  $\lim_{k \in \mathbb{K}}x^k =\bar{x}.$ Since $x^k\in \mathrm{dom}(G)$ for all $k\geq 0$, and   due to $ \mathrm{dom}(G)$ be compact, we conclude   that   $\bar{x}\in \mathrm{dom}(G)$.  On the other hand, Proposition~\ref{proposition4}{\it (i)}  implies that  $\lim_{k \in \mathbb{K}}\theta(x^k)=0$. Hence, considering that $\lim_{k \in \mathbb{K}}x^k =\bar{x}$,  it follows from   Lemma~\ref{optmalit.theta}{\it (iii)} that  $0\leq \theta(\bar{x})$. Thus, owing  to $\bar{x}\in \mathrm{dom}(G)$,  Lemma~\ref{optmalit.theta}{\it (i)} implies $\theta(\bar{x})=0$. Therefore, applying Lemma~\ref{optmalit.theta}{\it (ii)}, we conclude   that $\bar{x}$ is a  Pareto critical point of  problem~\eqref{eq:Problem1}. 
\end{proof}
 \begin{theorem}\label{teorema taxa3}
Assume that   $F$ satisfies (A5)--(A6).  Let $ (x^k)_{k \in \mathbb{N}}$ generated by Algorithm~\ref{Alg:CondG} with $\lambda_k$ satisfying the adaptive or the diminishing step size, i.e., \eqref{eq:fixed.step} or \eqref{eq:dimstep}.  Assume that  $\lim_{k \to \infty}F(x^k)=F(x^*)$,  for some $x^*\in \mathrm{dom}(G)$. Then
 	$$
 	\min_{\ell \in \left\{ \lfloor \frac{k}{2}\rfloor+2, \cdots,k  \right\} } |\theta(x^{\ell})| \leq \frac{8L\Omega^2}{k-2}, \quad k=3,4,\ldots.
 	$$
 \end{theorem}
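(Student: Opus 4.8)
The plan is to reduce the claim to Lemma~\ref{2taxe convv}(ii) by setting $b_k:=|\theta(x^k)|$, $A:=L\Omega^2$, $\beta_k:=2/(k+2)$, and $a_k:=\min_{j\in\J}\big(f_j(x^k)-f_j(x^*)\big)$, and then checking the three hypotheses of that lemma: nonnegativity of $(a_k)$ and $(b_k)$, the recursion $a_{k+1}\le a_k-\beta_k b_k+(A/2)\beta_k^2$, and the domination $a_k\le b_k$.

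First I would prove a single descent inequality valid for both step sizes:
\begin{equation}\label{eq:unifprop}
f_j(x^{k+1})\le f_j(x^k)-\beta_k|\theta(x^k)|+\tfrac{A}{2}\beta_k^2,\qquad \forall\, j\in\J,\ \forall\, k\in\N.
\end{equation}
For the diminishing step size $\lambda_k=\beta_k$, \eqref{eq:unifprop} is immediate from Lemma~\ref{lemma.desc.} (which uses (A5)), bounding $\|p(x^k)-x^k\|\le\Omega$ via \eqref{eq:diam} and using $\theta(x^k)=-|\theta(x^k)|$. For the adaptive step size I would exploit that, by \eqref{eq:fixed.step}, $\lambda_k$ minimizes $q(\lambda):=\lambda\theta(x^k)+\tfrac{L}{2}\|p(x^k)-x^k\|^2\lambda^2$ over $(0,1]$; since $\beta_k\in(0,1]$, Lemma~\ref{lemma.desc.} gives $f_j(x^{k+1})\le f_j(x^k)+q(\lambda_k)\le f_j(x^k)+q(\beta_k)$, and replacing $\|p(x^k)-x^k\|^2$ by $\Omega^2$ in $q(\beta_k)$ yields \eqref{eq:unifprop} once more.

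With \eqref{eq:unifprop} available, the recursion follows by picking, for each $k$, an index $\hat\jmath$ attaining $a_k$ and writing $a_{k+1}\le f_{\hat\jmath}(x^{k+1})-f_{\hat\jmath}(x^*)\le a_k-\beta_k|\theta(x^k)|+\tfrac{A}{2}\beta_k^2$. The domination $a_k\le b_k$ is where convexity (A6) enters: taking $u=x^*$ in the definition \eqref{eq:gapFunc} of $\theta$ and using $\langle\nabla h_j(x^k),x^*-x^k\rangle\le h_j(x^*)-h_j(x^k)$ gives $\theta(x^k)\le\max_{j\in\J}\big(f_j(x^*)-f_j(x^k)\big)=-a_k$, that is, $a_k\le|\theta(x^k)|=b_k$. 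Nonnegativity of $(b_k)$ is trivial, and once $a_k\ge0$ is secured all hypotheses hold, so Lemma~\ref{2taxe convv}(ii) yields $\min_{\ell\in\{\lfloor k/2\rfloor+2,\ldots,k\}}|\theta(x^\ell)|\le 8A/(k-2)=8L\Omega^2/(k-2)$ for $k\ge3$.

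The step I expect to be the main obstacle is the nonnegativity $a_k\ge0$, equivalently $F(x^*)\preceq F(x^k)$ for all $k$. For the adaptive step size this is clean: Lemma~\ref{lemma descente} shows $(F(x^k))_{k\in\N}$ is nonincreasing, hence it dominates its limit $F(x^*)$. For the diminishing step size the functional sequence need not be monotone, so I would have to establish the lower bound $F(x^*)\preceq F(x^k)$ by a separate argument drawing on the convexity of $F$ and the standing hypothesis $\lim_{k\to\infty}F(x^k)=F(x^*)$; this is the only genuinely delicate point, the remainder being the bookkeeping behind \eqref{eq:unifprop} and the index selection in the recursion.
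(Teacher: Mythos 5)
Your proposal follows essentially the same route as the paper's proof: the same uniform descent inequality with $\beta_k=2/(k+2)$ (for the adaptive step, obtained by comparing the minimizer $\lambda_k$ of the quadratic model over $(0,1]$ with the feasible point $\beta_k$), the same identification $a_k=\min_{j\in\J}\big(f_j(x^k)-f_j(x^*)\big)$, $b_k=|\theta(x^k)|$, $A=L\Omega^2$, the same use of $u=x^*$ in the gap function together with convexity of $h_j$ to obtain $a_k\le b_k$, and the same appeal to Lemma~\ref{2taxe convv}(ii). The only substantive difference is that you explicitly flag the hypothesis $a_k\ge 0$, i.e.\ $F(x^*)\preceq F(x^k)$, as unresolved for the diminishing step size; the paper simply asserts $0\ge f_j(x^*)-f_j(x^k)$ ``due to $\lim_{k\to\infty}F(x^k)=F(x^*)$'', which is justified by the monotonicity of $(F(x^k))_{k\in\N}$ only in the adaptive case (Lemma~\ref{lemma descente}), whereas for $\lambda_k=2/(k+2)$ the descent inequality yields merely $F(x^{k+1})\preceq F(x^k)+\tfrac{L}{2}\Omega^2\beta_k^2\,e$ and hence $F(x^*)\preceq F(x^k)+\tfrac{L\Omega^2}{2}\big(\sum_{\ell\ge k}\beta_\ell^2\big)e$, not exact domination. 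So your attempt reproduces the paper's argument, and the loose end you honestly identify is in fact present in the paper's own proof rather than being a defect specific to your approach.
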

\begin{proof}
We first claim that
 \begin{equation}\label{eq:2addim}
F(x^{k+1}) \preceq F(x^k)+\Big( \beta_k \theta (x^k)+\frac{L}{2}\|p(x^k)-x^k\|^2 \beta_k^2\Big)  e,
\end{equation}
where $\beta_k:=2/(k+2)$. Indeed, by applying  Lemma~\ref{lemma.desc.} with $x=x^k$ and $\lambda=\lambda_k$,  we have 
 \begin{equation}\label{eq:3addim}
F(x^k+\lambda_k[p(x^k)-x^k]) \preceq F(x^k)+\Big( \lambda_k \theta (x^k)+\frac{L}{2}\|p(x^k)-x^k\|^2 \lambda_k^2\Big)  e.
\end{equation}
If $\lambda_k$ is the diminishing step size given in \eqref{eq:dimstep}, then \eqref{eq:2addim} and \eqref{eq:3addim} trivially coincide.
We now assume that $\lambda_k$ is the adaptive step size given in \eqref{eq:fixed.step}. Since $\beta_k \in (0,1]$, it follows from  \eqref{eq:fixed.step} that
$ \lambda_k \theta(x^k)+(L/2)  \|p(x^k) -x^k\|^2 \lambda_k^2 \leq \beta_k\theta(x^k)+(L/2) \|p(x^k) -x^k\|^2 \beta_k^2$. The latter inequality together with \eqref{eq:3addim} yields \eqref{eq:2addim}.
Therefore, \eqref{eq:2addim} holds for both adaptive and diminishing strategies.
Now, by \eqref{eq:2addim} and  taking into account  that  $\|p(x^k) - x^k\| \leq \Omega$, we have
	\begin{equation} \label{eq:fici}
	\min_{j \in \cal{J}} \big( f_{j}(x^{k+1}) - f_{j}(x^*)\big) \leq 	\min_{j \in \cal{J}} \big( f_{j}(x^{k}) - f_{j}(x^*)\big)+\beta_k  \theta(x^k)  + \frac{L}{2}\Omega^2\beta_k  ^2, \quad \forall n\in\N.
		\end{equation}
	On the other hand,  due to  $\lim_{k \to \infty}F(x^k)=F(x^*)$ and using   (A6),  we  obtain
	$$
	0 \geq f_{j}(x^*) -f_{j}(x^k) \geq \big\langle \nabla h_j(x^k), x^* - x^k \big\rangle + g_{j}(x^*) -g_{j}(x^k), \quad k \in \N.
	$$
	Thus, taking the maximum and using the optimality of $p(x^k)$ in \eqref{eq:opt de p}, we have 
	$$
	0 \geq \max_{j \in \cal{J}}\big( f_{j}(x^*)-f_{j}(x^k) \big) \geq \max_{j \in \cal{J}} \big( \big\langle  \nabla h_j (x^k ) , x^* -x^k\big\rangle+ g_j(x^*)-g_j (x^k) \big) \geq  \theta(x^k), 
	$$
	which implies  $0 \leq \min_{j \in \cal{J}}\{ f_{j}(x^k)-f_{j}(x^*)\}  \leq  |\theta(x^k)|$. Therefore, using \eqref{eq:fici},  we can apply Lemma~\ref{2taxe convv}{\it (ii)} with $a_k = \min_{j \in \cal{J}}\{ f_{j}(x^k)-f_{j}(x^*)\}$, $b_k =| \theta(x^k)| $,
and $A=L \Omega^2$ to obtain the desired inequality.	
\end{proof}

\begin{remark} 
Assume that   $F$ satisfies (A5)--(A6).  Let $ (x^k)_{k \in \mathbb{N}}$ generated by Algorithm~\ref{Alg:CondG} with $\lambda_k$ satisfying the  adaptive or the diminishing step  sizes, i.e., \eqref{eq:fixed.step} or \eqref{eq:dimstep}.   Suppose that  $\lim_{k \to \infty}F(x^k)=F(x^*)$,  for some $x^*\in \mathrm{dom}(G)$. Then
\begin{equation}\label{taxa-- min.}
\min_{j \in \cal{J}}\left( f_{j}(x^k) - f_{j}(x^*)\right) \leq \frac{2L\Omega^2}{k},   \quad  \forall k \in \N.
\end{equation}
Indeed, by using the same argument of the proof of Theorem~\ref{teorema taxa3} and Lemma~\ref{2taxe convv}{\it (i)}, the inequality \eqref{taxa-- min.} follows.
  \end{remark}  
  
\section{Numerical experiments} \label{numerical}

This section presents some numerical experiments in order to illustrate the applicability of our approach. For this aim, we compare:
\begin{itemize}
\item the Generalized Conditional Gradient method (Algorithm~\ref{Alg:CondG});
\item the Proximal Gradient method proposed in \cite{TanabeFukudaYamashita2019}.
\end{itemize}
 We implemented both methods using the Armijo step size strategy with parameters $\zeta=10^{-4}$, $\omega_1=0.05$, and $\omega_2=0.95$. 
 Without attempting to go into details, we remark that the Armijo line search was coded based on quadratic polynomial interpolations of the coordinate functions, see \cite{PerezPrudente2019} for line search strategies in the vector optimization setting. 
The main difference between the two considered methods consists of the subproblem to be solved to calculate the search direction. 
While for the Generalized Conditional Gradient method the subproblem is given in \eqref{eq:opt de p}, in the Proximal Gradient method the search direction in iteration $k$ is defined by $d_{PG}(x^k):=p_{PG}(x^k)-x^k$, where
\begin{equation}\label{PGsub}
p_{PG}(x^k) \in \arg\min_{u\in {\mathbb{R}^n}}\max_{j\in {\cal J}}\big{(}g_{j}(u)-g_{j}(x^k)+ \big\langle \nabla  h_j(x^k),  u - x^k \big\rangle +\frac{\mu}{2}\|u - x^k\|^2\big{)},
\end{equation}
and $\mu>0$ is an algorithmic parameter. In our experiments, we set $\mu:=1$. In this case, when $G(x)\equiv 0$,
\eqref{PGsub} reduces to the classical steepest descent approach proposed in \cite{FliegeSvaiter2000}.
We denote the optimal value of problem \eqref{PGsub} by $\theta_{PG}(x^k)$.
As in Lemma~\ref{optmalit.theta}, $\theta_{PG}(\cdot)$ can be used to characterize Pareto critical points, see \cite{TanabeFukudaYamashita2019}.
In order to standardize the stopping criteria, all runs were stopped at an iterate $x^k$ declaring convergence if
\begin{equation} \label{sc}
\frac{\|x^k-x^{k-1}\|_{\infty}}{\max\{1,\|x^{k-1}\|_{\infty}\}}\leq 10^{-4} \quad \mbox{and}\quad \abs[1]{\theta_{\textrm{PG}}(x^k)} \leq 10^{-4}.
\end{equation}
The first criterion in \eqref{sc} seeks to detect the convergence of the sequence $\{x^k\}$, while the second guarantees to stop at an {\it approximately} Pareto critical point. For Algorithm~\ref{Alg:CondG}, we only calculate $\theta_{\textrm{PG}}(x^k)$ when the first criterion in  \eqref{sc} is satisfied. We also consider a stopping criterion related to failures: the maximum number of allowed iterations was set to 200. 
The codes are written in Matlab and are freely available at \url{https://github.com/lfprudente/CompositeMOPCondG}.

\vspace{12pt}
\noindent{\bf Set of test problems:}
The set of test problems is related to robust multiobjective optimization.
Robust optimization deals with uncertainty in the data of optimization problems, in such a way that the optimal solution must occur in the worst possible scenario, i.e., for the worst possible value that the uncertain data can assume.
Let us discuss how test problems were designed.
The differentiable part $H$ that makes up the objective function $F$ comes from some multiobjective problem found in the literature.
Table~\ref{tab:problems} shows the main characteristics of the chosen problems.
The first two columns identify the name of the problem and the corresponding reference where its formulation can be found.
Columns ``$n$'' and ``$m$'' inform the numbers of variables and objectives of the problem, respectively. 
Column ``Convex'' indicates whether the corresponding function $H$ is convex or not.
For each test problem, we denote the uncertainty parameter by $z\in\R^n$ and assume that its information is incorporated into function $G$.
 For each  $j \in {\cal J}$, we assume that $\mathrm{dom}(g_j)=\{x\in\R^n \mid lb \preceq x\preceq ub\}=:\cal{C}$, where $lb,ub\in\R^n$ are given in the last columns of Table~\ref{tab:problems}, and define $g_{j}:\cal{C}\to\R$ by
\begin{equation}\label{gdef}
g_{j}(x):=\displaystyle \max_{z \in {\cal{Z}}_j}   \langle x,z \rangle,
\end{equation}
 where ${\cal{Z}}_{j} \subset \R^n$ is the {\it uncertainty set}. 
 Let $B_j \in \R^{n\times n}$ a nonsingular matrix and $\delta > 0$ be given. 
We set  
\begin{equation}\label{Zdef}
{\cal{Z}}_{j} := \{ z \in \R^n \mid  -\delta e \preceq B_jz\preceq \delta e\},
\end{equation}
where  $e=(1,\ldots,1)^T\in\R^n$.
Since ${\cal{Z}}_{j}$ is a nonempty and compact, $g_{j}(x)$ is well-defined.
It is easy to see that  $g_j$ satisfies (A2)--(A4).
Note that parameter $\delta$ controls the uncertainty of the problem.
In our tests, the elements of the matrix $B_j$ were randomly chosen between 0 and 1.
In turn, given an arbitrary point $\bar x\in \cal{C}$, parameter $\delta$ was taken as
\begin{equation}\label{deltadef}
\delta := \bar{\delta} \|\bar x\|,
\end{equation}
 where $0.02\leq \bar{\delta} \leq 0.10$ was also chosen at random.
We mention that the definition of the non-differentiable function $G$ in \eqref{gdef}--\eqref{Zdef} has appeared in \cite{TanabeFukudaYamashita2019}.
Other works dealing with robust multiobjective optimization problems include \cite{EhrgottIdeSchobel2014, FliegeWerner2014, IdeSchobel2016}.

\begin{table}[h!]
\centering
\begin{threeparttable}{\fontsize{7}{7.1}\selectfont
\rowcolors{2}{}{lightgray}
\begin{tabular}{|ccccccc|} 
\hline
\rowcolor[gray]{.90} Problem & Ref. & $n$  & $m$ & Convex & $lb$ & $ub$ \\ 
\hline              
AP1&  \cite{ansary} &     2 &  3 & Y  & $(-10,-10)$ & $(10,10)$\\
AP2&  \cite{ansary} &     1 &  2 & Y  & $-100$ & $100$\\
AP3&  \cite{ansary} &     2 &  2 & N  & $(-100,-100)$ & $(100,100)$\\
AP4&  \cite{ansary}  &     3 &  3 & Y  & $(-10,-10,-10)$ & $(10,10,10)$\\
BK1&  \cite{reviewproblems}  &  2 &  2 & Y & $(-5,-5)$  & $(10,10)$   \\ 
DD1&  \cite{dd1} &  5 &  2 & N  & $(-20,\ldots,-20)$ & $(20,\ldots,20)$\\
DGO1&  \cite{reviewproblems}  &  1 &  2 & N  & $-10$  & $13$   \\ 
DGO2&  \cite{reviewproblems}  &  1 &  2 & Y  & $-9$  & $9$   \\ 
FA1&  \cite{reviewproblems}  &  3 &  3 & N  & $(0.01,0.01,0.01)$  & $(1,1,1)$   \\ 
Far1&  \cite{reviewproblems}  &  2 &  2 & N  & $(-1,-1)$  &  $(1,1)$\\ 
FDS &  \cite{Fliege.etall2009}  & 5 & 3 &Y& $(-2,\ldots,-2)$ &  $(2,\ldots,2)$  \\  
FF1 & \cite{reviewproblems} &     2 &  2 & N  & $(-1,-1)$ &  $(1,1)$ \\ 
Hil1&  \cite{hill} &     2 &  2 & N  & $(0,0)$  &   $(1,1)$\\ 
IKK1&  \cite{reviewproblems}  &  2 &  3 & Y  & $(-50,-50)$  & $(50,50)$   \\ 
IM1&  \cite{reviewproblems}  &  2 &  2 & N  & $(1,1)$  & $(4,2)$   \\ 
JOS1&  \cite{10.5555/2955239.2955427} &  100 &  2 & Y  & $(-100,\ldots,-100)$ & $(100,\ldots,100)$\\
JOS4&  \cite{10.5555/2955239.2955427}  &  100 &  2 & N  & $(-100,\ldots,-100)$ & $(100,\ldots,100)$\\
KW2 & \cite{kw2} &  2 &  2 & N  & $(-3,-3)$  &  $(3,3)$ \\ 
LE1&  \cite{reviewproblems}  &  2 &  2 & N  & $(1,1)$  & $(10,10)$   \\ 
Lov1& \cite{doi:10.1137/100784746} & 2 & 2 & Y & $(-10,-10)$ & $(10,10)$\\
Lov2& \cite{doi:10.1137/100784746} & 2 & 2 & N & $(-0.75,-0.75)$ & $(0.75,0.75)$\\
Lov3& \cite{doi:10.1137/100784746} & 2 & 2 & N & $(-20,-20)$ & $(20,20)$\\
Lov4& \cite{doi:10.1137/100784746} & 2 & 2 & N & $(-20,-20)$ & $(20,20)$\\
Lov5& \cite{doi:10.1137/100784746} & 3 & 2 & N & $(-2,-2,-2)$ & $(2,2,2)$\\
Lov6& \cite{doi:10.1137/100784746} & 6 & 2 & N & $(0.1,-0.16,\ldots,-0.16)$ & $(0.425,0.16,\ldots,0.16)$\\
LTDZ&  \cite{laumanns2002combining} &  3 &  3 & N & $(0,0,0)$  & $(1,1,1)$   \\ 
MGH9\tnote{a}& \cite{moretest}  &   3 &  15 & N  & $(-2,-2,-2)$ &  $(2,2,2)$  \\  
MGH16\tnote{a}& \cite{moretest}  &   4 &  5 & N  & $(-25,-5,-5,-1)$ & $(25,5,5,1)$  \\  
MGH26\tnote{a} & \cite{moretest} &    4 &  4 & N  & $(-1,-1,-1-1)$ &  $(1,1,1,1)$ \\ 
MGH33\tnote{a} & \cite{moretest} &    10 &  10 & Y  & $(-1,\ldots,-1)$ &  $(1,\ldots,1)$ \\ 
MHHM2&  \cite{reviewproblems}  &  2 &  3 & Y  & $(0,0)$  & $(1,1)$   \\ 
MLF1&  \cite{reviewproblems}  &    1 &  2 & N  & $0$  & $20$   \\ 
MLF2&  \cite{reviewproblems}  &    2 &  2 & N  & $(-100,-100)$ & $(100,100)$\\
MMR1  & \cite{italianos} &     2 &  2 & N  & $(0.1,0)$   & $(1,1)$   \\ 
MMR2  & \cite{italianos} &     2 &  2 & N  & $(0,0)$   & $(1,1)$   \\ 
MMR3  & \cite{italianos} &     2 &  2 & N  & $(-1,-1)$   & $(1,1)$   \\ 
MMR4  & \cite{italianos} &     3 &  2 & N  & $(0,0,0)$   & $(4,4,4)$   \\ 
MOP2 &  \cite{reviewproblems} &     2 &  2 & N  & $(-4,-4)$ &  $(4,4)$  \\ 
MOP3 & \cite{reviewproblems} &     2 &  2 & N  & $(-\pi,-\pi)$  & $(\pi,\pi)$   \\ 
MOP5&  \cite{reviewproblems} &     2 &  3 & N  & $(-30,-30)$ &  $(30,30)$ \\ 
MOP6&  \cite{reviewproblems} &     2 &  2 & N  & $(0,0)$ &  $(1,1)$ \\ 
MOP7&  \cite{reviewproblems} &     2 &  3 & Y  & $(-400,-400)$  & $(400,400)$  \\ 
PNR&  \cite{pnr} &  2 &  2 & Y & $(-2,-2)$  &   $(2,2)$\\ 
QV1&  \cite{reviewproblems}  &  10 &  2 & N & $(0.01,\ldots,0.01)$  & $(5,\ldots,5)$   \\ 
SD&  \cite{stadler1993multicriteria} &  4 &  2 & Y  & $(1,\sqrt{2},\sqrt{2},1)$  & $(3,3,3,3)$   \\ 
SK1 & \cite{reviewproblems} &      1 &  2 & N & $-100$ &$100$   \\ 
SK2 & \cite{reviewproblems} &      4 &  2 & N  & $(-10,-10,-10,-10)$ & $(10,10,10,10)$  \\ 
SLCDT1&  \cite{slcdt} &     2 &  2 & N  & $(-1.5,-1.5)$  &  $(1.5,1.5)$ \\ 
SLCDT2&  \cite{slcdt} &   10 &  3 & Y  & $(-1,\ldots,-1)$  &  $(1,\ldots,1)$ \\ 
SP1&  \cite{reviewproblems} &     2 &  2 & Y  &  $(-100,-100)$ & $(100,100)$  \\ 
SSFYY2&  \cite{reviewproblems} &     1 &  2 & N  &  $-100$ & $100$  \\ 
TKLY1&  \cite{reviewproblems} &     4 &  2 & N  &  $(0.1,0,0,0)$ & $(1,1,1,1)$  \\ 
Toi4\tnote{a}&  \cite{tointtest} &  4 &  2 & Y  & $(-2,-2,-2,-2)$  & $(5,5,5,5)$  \\ 
Toi8\tnote{a}&  \cite{tointtest} &  3 &  3 & Y  & $(-1,-1,-1,-1)$  & $(1,1,1,1)$  \\ 
Toi9\tnote{a} & \cite{tointtest}  & 4   & 4 & N & $(-1,-1,-1,-1)$  & $(1,1,1,1)$  \\ 
Toi10\tnote{a} &\cite{tointtest}  & 4 & 3 & {N}  & $(-2,-2,-2,-2)$  & $(2,2,2,2)$     \\  
VU1&  \cite{reviewproblems} &   2 &  2 & N  & $(-3,-3)$ &$(3,3)$   \\ 
VU2&  \cite{reviewproblems} &   2 &  2 & Y  & $(-3,-3)$ &$(3,3)$   \\ 
ZDT1 & \cite{doi:10.1162/106365600568202} & 30 & 2 & Y & $(0,\ldots,0)$ & $(1,\ldots,1)$ \\ 
ZDT2 & \cite{doi:10.1162/106365600568202} & 30 & 2 & N &  $(0.01,\ldots,0.01)$ & $(1,\ldots,1)$ \\ 
ZDT3 & \cite{doi:10.1162/106365600568202} & 30 & 2 & N &  $(0.01,\ldots,0.01)$ & $(1,\ldots,1)$ \\ 
ZDT4 & \cite{doi:10.1162/106365600568202} & 30 & 2 & N &  $(0.01,-5,\ldots,-5)$ & $(1,5,\ldots,5)$ \\ 
ZDT6 & \cite{doi:10.1162/106365600568202} & 10 & 2 & N &  $(0.01,\ldots,0.01)$ & $(1,\ldots,1)$ \\ 
ZLT1&  \cite{reviewproblems} &   10 &  5 & Y  & $(-1000,\ldots,-1000)$ &$(1000,\ldots,1000)$   \\ 
\hline
\end{tabular}}
 \begin{tablenotes}
      \tiny
      \item[a] This is an adaptation of a single-objective optimization problem to the multiobjective setting that can be found in \cite{ellen}.
  \end{tablenotes}
\end{threeparttable}
\caption{List of test problems.}
\label{tab:problems}
\end{table}

\vspace{12pt}
\noindent{\bf Solving the subproblems:}
We first note that  a  solution of the subproblem in \eqref{eq:opt de p}  can be calculated by solving for $\tau \in  \R$ and $u\in \R^n$ the following constrained  problem
\begin{equation} \label{vproblem}
 \begin{array}{cl}
\min_{\tau,u}   &  \tau         \\
\mbox{s.t.} & g_{j}(u)-g_{j}(x^k) +  \langle \nabla  h_j(x^k),  u-x^k \rangle \leq \tau,  \quad \forall j \in {\cal J},\\
                            & lb\preceq u\preceq ub.
\end{array}
\end{equation}
However, since $g_j(\cdot)$ in \eqref{gdef}--\eqref{Zdef} is non-differentiable, the inequalities in \eqref{vproblem}  are difficult to deal with. 
On the other hand, if we define $A_j:=[B_j; -B_j] \in\R^{2n\times n}$ and $b_j:=\delta e\in\R^{2n}$, then \eqref{gdef}--\eqref{Zdef} can be rewritten as
\begin{equation} \label{evalg}
\begin{array}{cl}
\max_{z}   &  \langle x,z \rangle         \\
\mbox{s.t.} & A_j z \preceq b_j, \\
\end{array}
\end{equation}
for which the dual problem is given by
\[\begin{array}{cl}
\min_{w}   &  \langle b_j,w \rangle         \\
\mbox{s.t.} & A_j^\top w = x, \\
                  & w \succeq 0.
\end{array}\]
By using duality theory, it follows that  \eqref{vproblem} (and thus \eqref{eq:opt de p}) is equivalent to the following linear programming problem
\begin{equation} \label{vproblem2}
 \begin{array}{cl}
\min_{\tau,u,w_j}   &  \tau         \\
\mbox{s.t.} & \langle b_j,w \rangle  -g_{j}(x^k) +  \langle \nabla  h_j(x^k),  u-x^k \rangle \leq \tau,\\
                            & A_j^\top w_j = u, \\
                            & w_j \succeq 0,\quad \forall j \in {\cal J}, \\
                            & lb\preceq u\preceq ub.
\end{array}
\end{equation}
Likewise, the subproblem \eqref{PGsub} of the Proximal Gradient method can be reformulated as the following quadratic programming problem
\begin{equation} \label{vproblem3}
 \begin{array}{cl}
\min_{\tau,u,w_j}   &  \tau+\ds\frac{\mu}{2}\|u - x^k\|^2      \\
\mbox{s.t.} & \langle b_j,w \rangle  -g_{j}(x^k) +  \langle \nabla  h_j(x^k),  u-x^k \rangle \leq \tau,\\
                            & A_j^\top w_j = u, \\
                            & w_j \succeq 0,\quad \forall j \in {\cal J}, \\
                            & lb\preceq u\preceq ub,
\end{array}
\end{equation}
for details see \cite[Section~5.2 (a)]{TanabeFukudaYamashita2019}. In our codes, we use a simplex-dual method ({\it linprog} routine) to solve \eqref{evalg} and \eqref{vproblem2}, and an interior point method  ({\it quadprog} routine) to solve \eqref{vproblem3}.

\subsection{Efficiency and robustness}

For each test problem, we considered 100 starting points randomly generated at the corresponding $\mathrm{dom}(G)=\{x\in\R^n \mid lb \preceq x\preceq ub\}$.
In this phase, each problem/starting point was considered an independent instance and solved by both algorithms.
If an approximate critical point is found, a run is considered successful regardless of the objective function value. 
Figure~\ref{fig:results} shows the results using performance profiles~\cite{dolan2002benchmarking}, comparing the algorithms with respect to: (a) CPU time; (b) number of iterations.
We emphasize that the results are similar if we consider the number of function evaluations.
In a profile performance, {\it efficiency} and {\it robustness} can be accessed on the extreme left (at 1 in the domain) and right of the graph, respectively.
As can be seen, the Conditional Gradient method was more efficient than the Proximal Gradient method considering both performance measures.
Regarding CPU time (resp. number of iterations), the efficiencies of the algorithms were $69.1\%$ and $30.3\%$ (resp. $72.8\%$ and $39.6\%$) for Algorithm~\ref{Alg:CondG} and the Proximal Gradient method, respectively.
The slightly larger difference with respect to CPU time can be explained by the fact that subproblem~\eqref{vproblem2} is simpler than subproblem~\eqref{vproblem3}, making an iteration of Algorithm~\ref{Alg:CondG} cheaper than an iteration of the Proximal Gradient method.
Both algorithms proved to be robust on the chosen set of test problems, which is in agreement with their convergence theories.
Algorithm~\ref{Alg:CondG} and the Proximal Gradient method successfully solved $98.8\%$ and $98.3\%$ of the problem instances.

\noindent\begin{figure}[H]
\centering \small
  \begin{tabular}{cc} 
  (a) CPU time &(b) Iterations\\ 
\hspace{-12pt}\includegraphics[scale=\myscale]{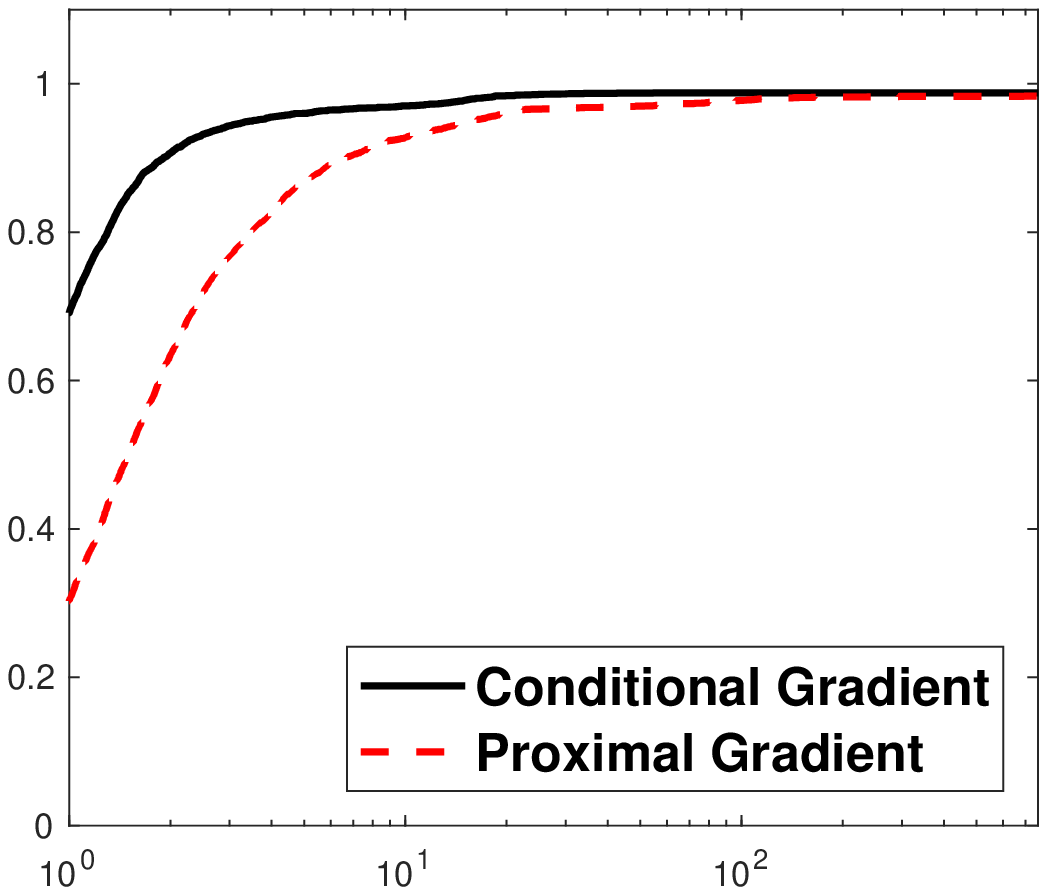}& \hspace{-12pt}\includegraphics[scale=\myscale]{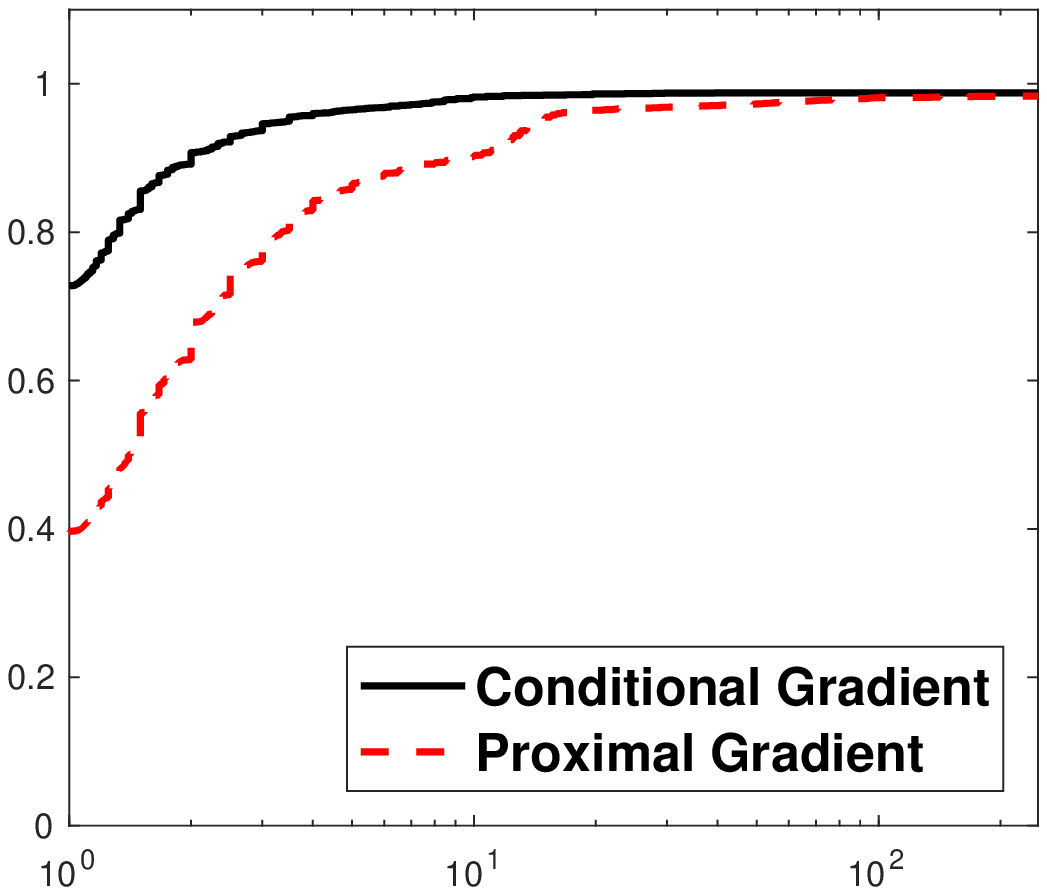} \\
\end{tabular}
\caption{Performance profiles considering 100 starting points for each test problem using as the performance measurement: (a) CPU time; (b) number of iterations.}
\label{fig:results}
\end{figure}

\subsection{Pareto frontiers}

In multiobjective optimization, we are mainly interested in estimating the Pareto frontier of a given problem.
A commonly used strategy for this task is to run an algorithm from several starting points and collect the efficient points found.
Thus, given a test problem, we run each algorithm for 2 minutes obtaining an approximation of the Pareto frontier.
We compare the results using the well-known {\it Purity} and ($\Gamma$ and $\Delta$) {\it Spread} metrics.
In summary, given a problem, the Purity metric measures the ability of an algorithm to find points on the Pareto frontier, while a Spread metric measures the ability to obtain well-distributed points along the Pareto frontier.
For a careful discussion of these metrics and their uses along with performance profiles, see \cite{doi:10.1137/10079731X}.
The results in Figure~\ref{fig:metrics} show that no significant differences are notice for the three metrics.
This suggests that the Conditional Gradient method is competitive with the Proximal Gradient method in terms of obtaining {\it good} approximations of the Pareto frontier.

\noindent\begin{figure}[H]
\centering \small
 \begin{tabular}{ccc}
 (a) Purity &(b) Spread $\Gamma$&(c) Spread $\Delta$\\
\hspace{-12pt}\includegraphics[scale=\myscale]{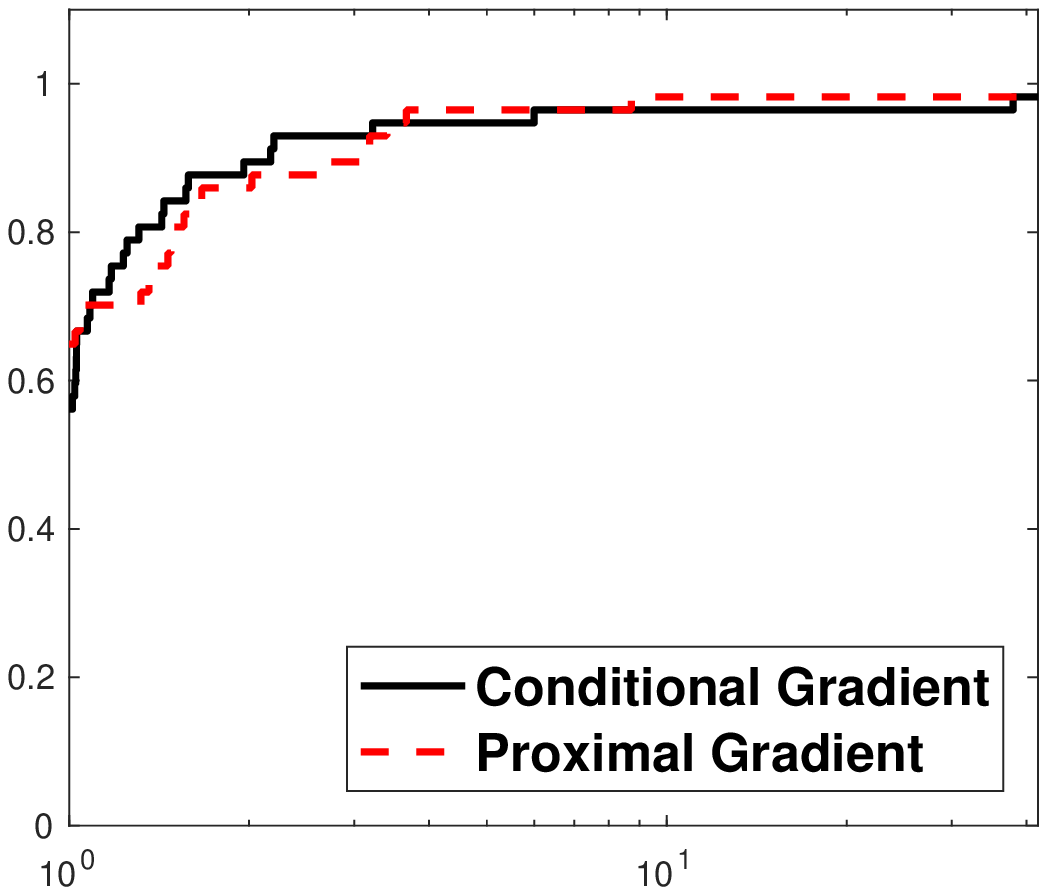}&\hspace{-12pt} \includegraphics[scale=\myscale]{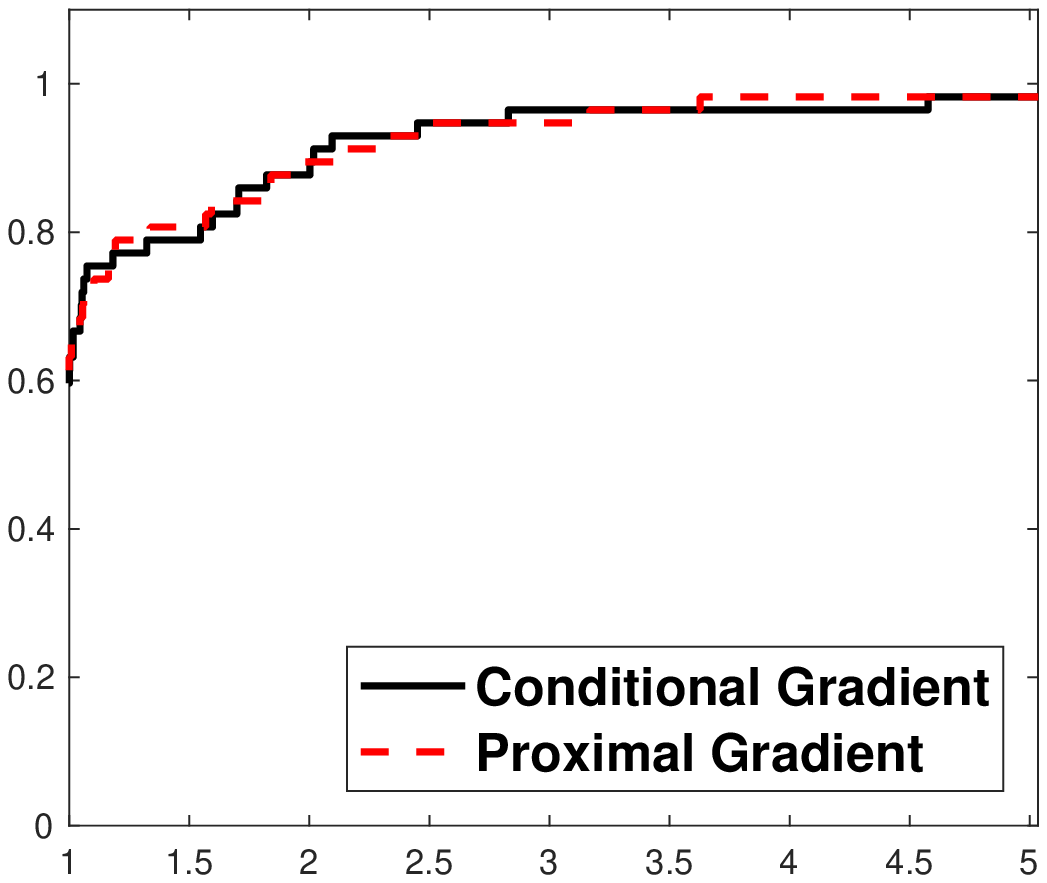}&\hspace{-12pt}\includegraphics[scale=\myscale]{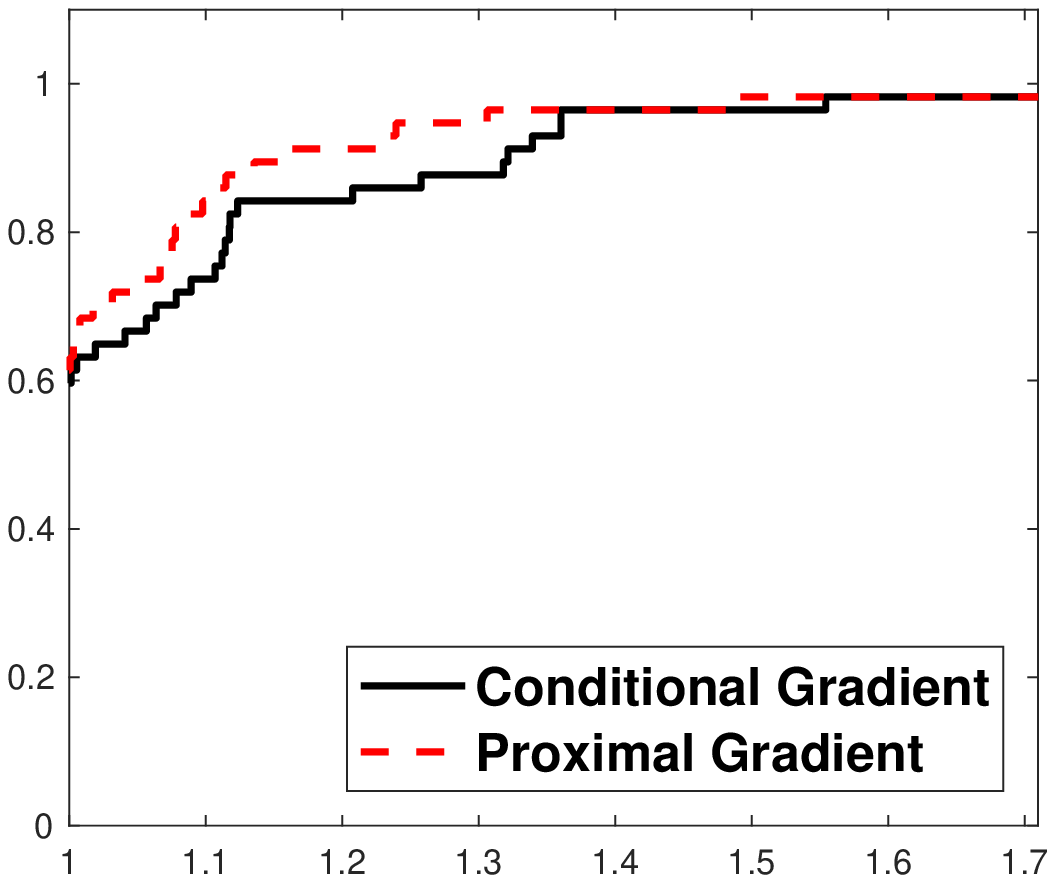}\\
\end{tabular}
\caption{Metric performance profiles considering 2 minutes for each test problem: (a) Purity; (b) Spread $\Gamma$; (c) Spread $\Delta$.}
\label{fig:metrics}
\end{figure}


We conclude the numerical experiments by illustrating the influence of the uncertainty parameter.
Figure~\ref{fig:pareto} shows the image of the Pareto critical points found by Algorithm~\ref{Alg:CondG} using 200 random starting points for problems BK1, IM1, MOP2, SD, SLCDT1, and VU2, considering the following values for  the uncertainty parameter: $\delta$ given by \eqref{deltadef} with $ \bar{\delta} = 0.02$, $0.05$, and $0.10$. As can be seen in Figure~\ref{fig:pareto}, as expected, smaller values of the uncertainty parameter are associated with better objective function values.

\noindent\begin{figure}[H]
\centering \small
 \begin{tabular}{ccc}
 (a) BK1 &(b) IM1 &(c) MOP2\\
\hspace{-12pt}\includegraphics[scale=\myscaletwo]{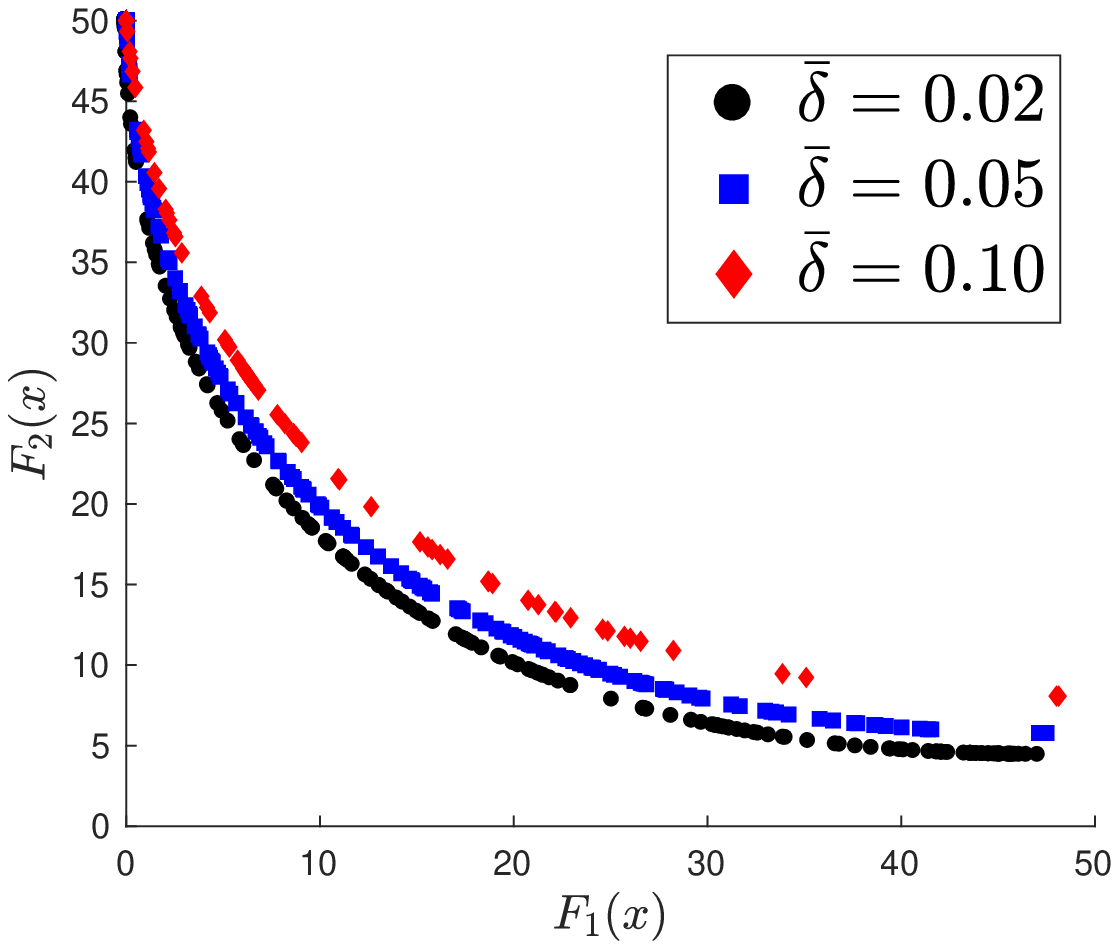}&\hspace{-12pt} \includegraphics[scale=\myscaletwo]{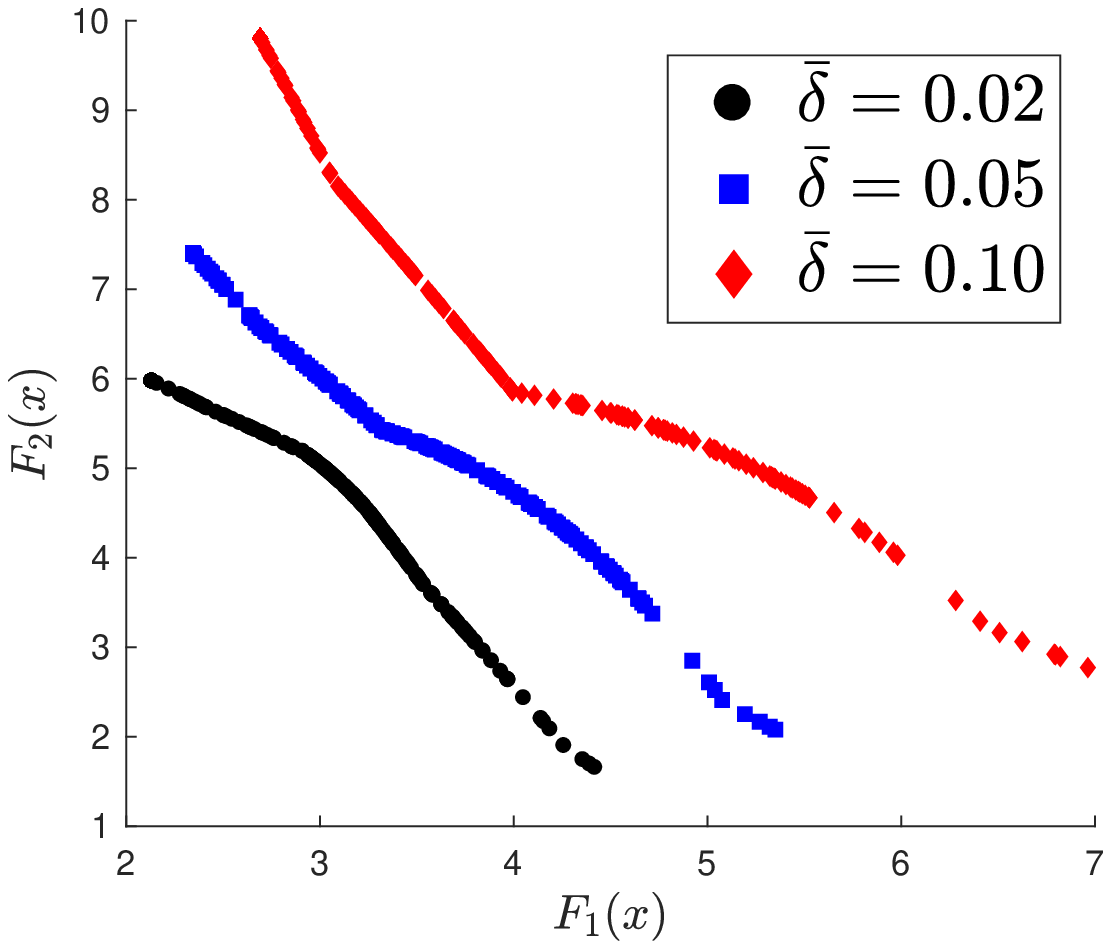}&\hspace{-12pt}\includegraphics[scale=\myscaletwo]{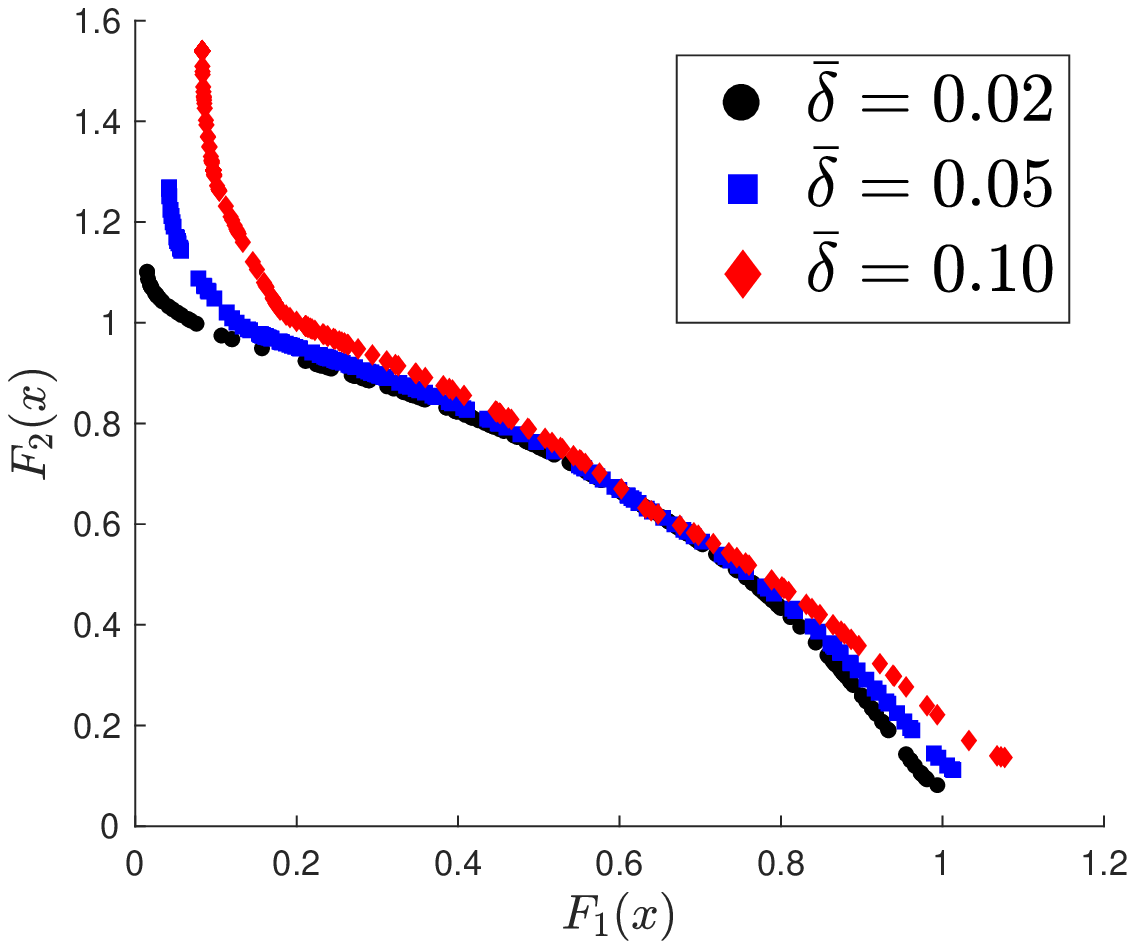}\\
 (d) SD &(e) SLCDT1 &(f) VU2 \\
\hspace{-12pt}\includegraphics[scale=\myscaletwo]{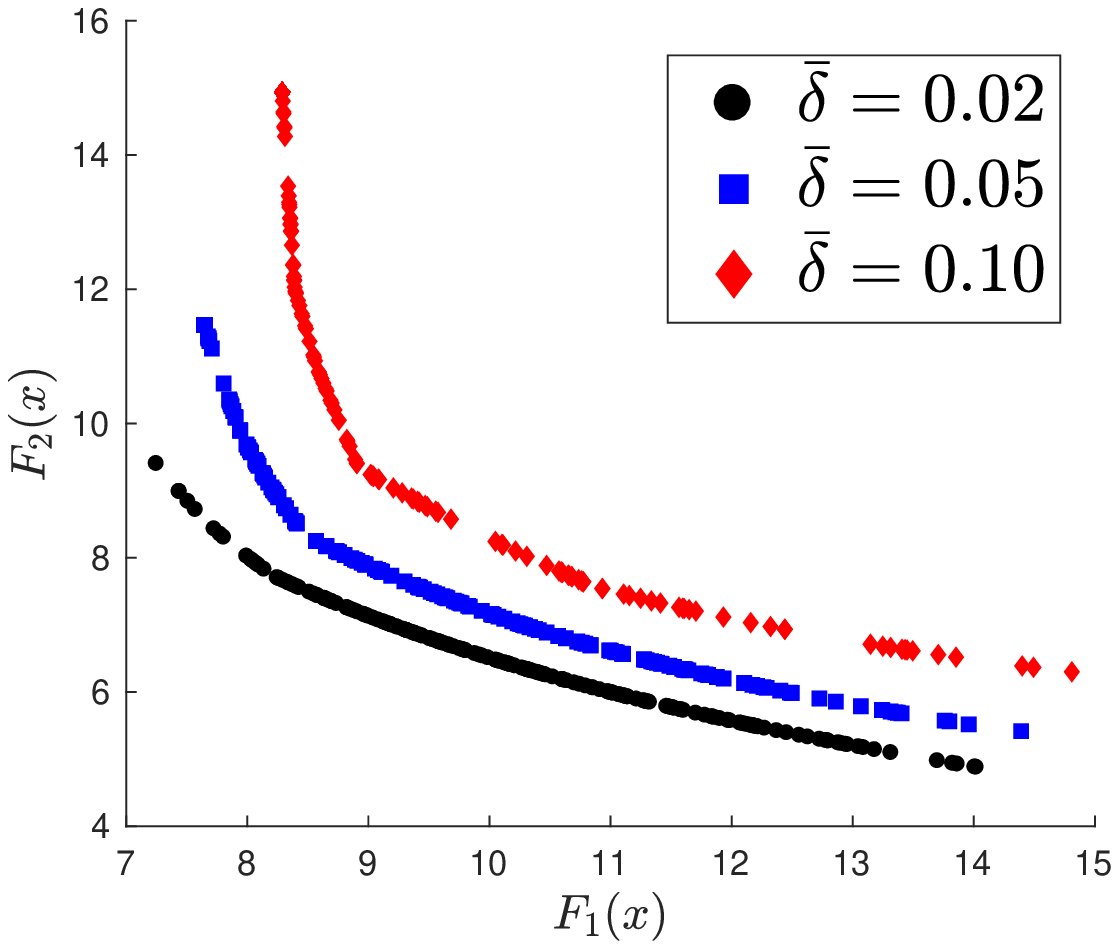}&\hspace{-12pt} \includegraphics[scale=\myscaletwo]{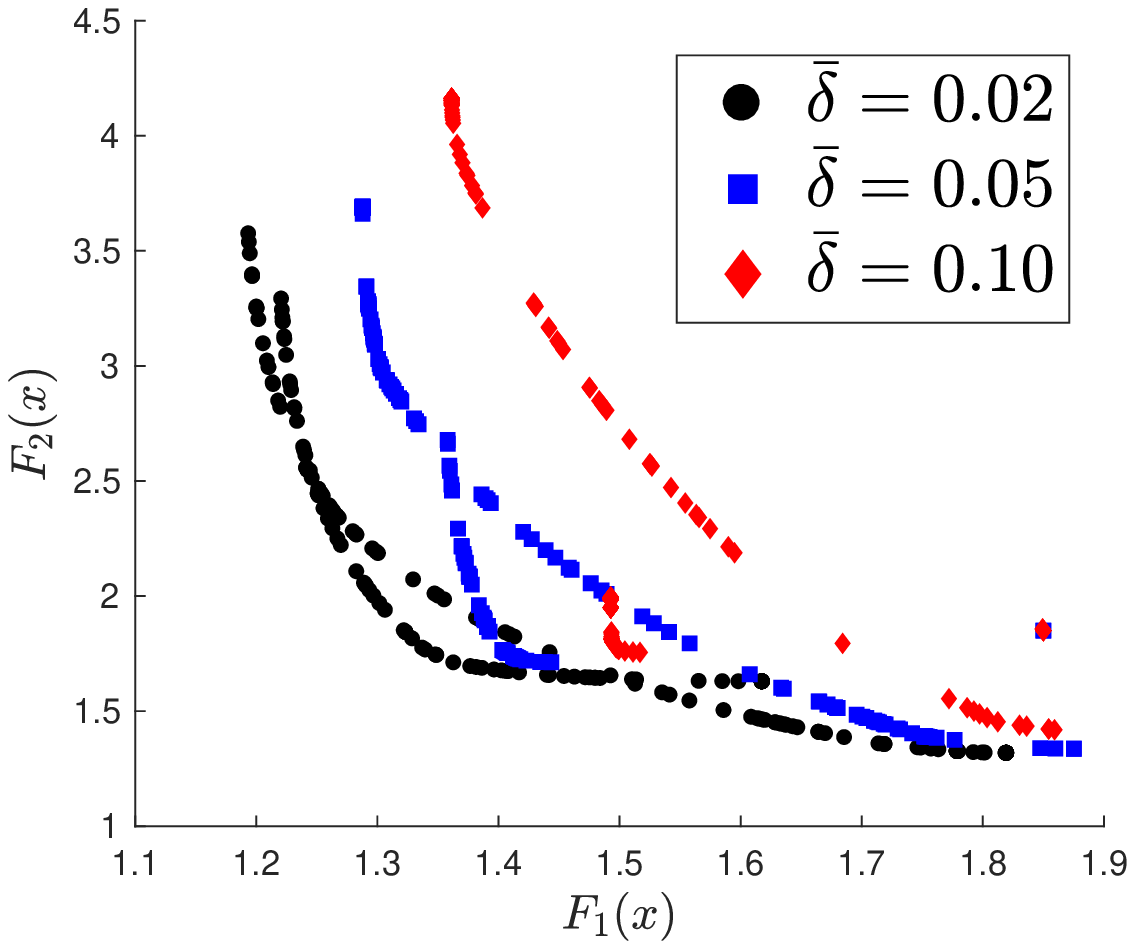}&\hspace{-12pt}\includegraphics[scale=\myscaletwo]{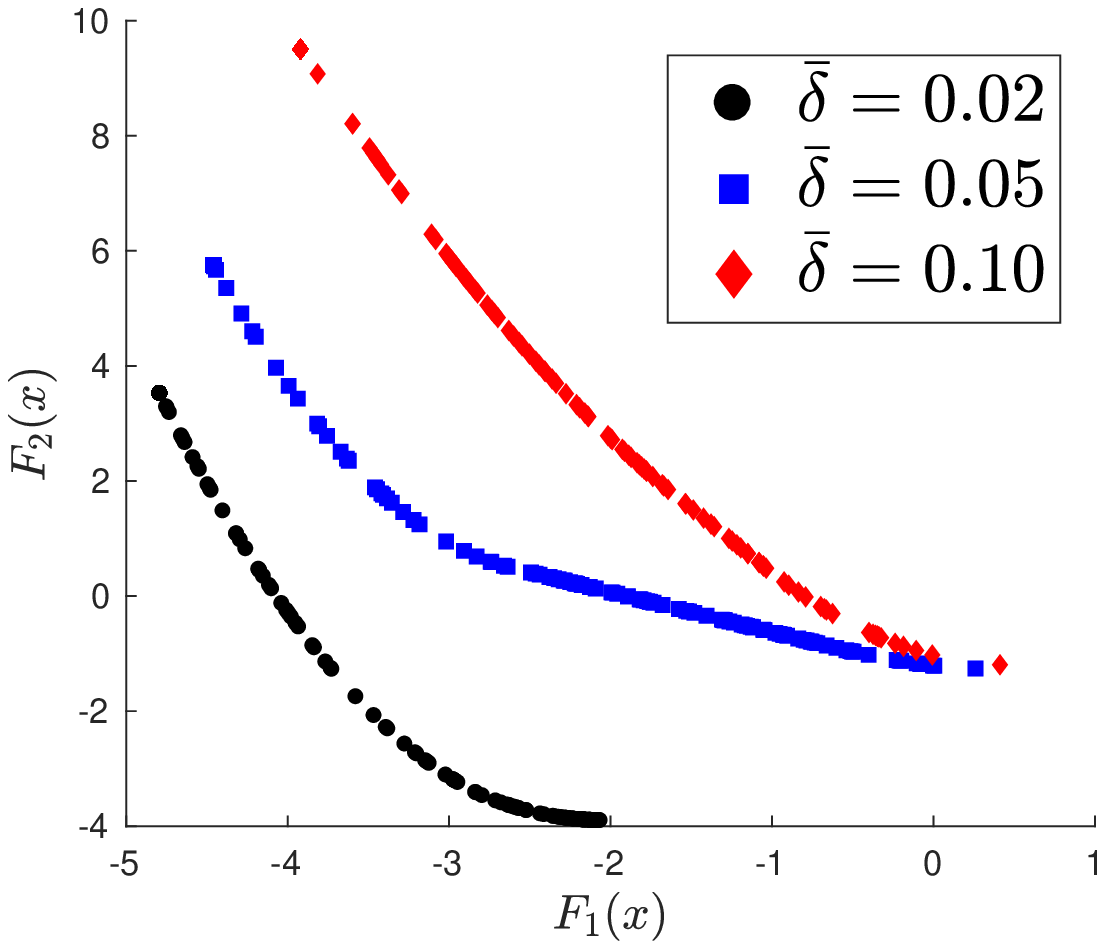}\\
\end{tabular}
\caption{Image of the Pareto critical points found by Algorithm~\ref{Alg:CondG} with different values for the uncertainty parameter for problems: (a) BK1; (b) IM1; (c) MOP2; (d) SD; (e) SLCDT1; (f) VU2.}
\label{fig:pareto}
\end{figure}

\section{Conclusions}\label{conclusions}
This paper extends the generalized conditional gradient method for multiobjective  composite optimization problems. 
Our analysis was carried out with and without convexity and Lipschitz assumptions on the smooth component of the  objective functions and considering different step size strategies.  
 The numerical results suggests that the proposed method is competitive with the Proximal Gradient method recently introduced in \cite{TanabeFukudaYamashita2019}, in terms of computational efficiency and ability to generate Pareto frontiers properly.
It would be interesting to extend the results of the present paper for composite vector optimization problem, i.e., when the partial order is induced by other underlying  cones instead of the non-negative orthant.

\section*{Data availability} 
The codes supporting the numerical experiments are freely available in the Github repository, \url{https://github.com/lfprudente/CompositeMOPCondG}.

\bibliographystyle{abbrv}
\bibliography{BibtexSumTwo}
\end{document}